\newtheorem{thm}{Theorem}[section]
\newtheorem{cor}[thm]{Corollary}
\newtheorem{lem}[thm]{Lemma}
\newtheorem{pro}[thm]{Proposition}
\theoremstyle{definition}
\newtheorem{defi}[thm]{Definition}
\newtheoremstyle{remarque}{}{}{}{}{\it}{.}{\newline}{}
\theoremstyle{remarque}
\newtheorem*{rem}{Remark}
\newcommand{\asd}[5]{%
\setbox1=\hbox{\ensuremath{^{#1}}}%
\setbox2=\hbox{\ensuremath{_{#2}}}%
\setbox5=\hbox{\ensuremath{#5}}%
\hspace{\ifnum\wd1>\wd2\wd1\else\wd2\fi}%
\ensuremath{\copy5^{\hspace{-\wd1}\hspace{-\wd5}#1\hspace{\wd5}#3}%
_{\hspace{-\wd2}\hspace{-\wd5}#2\hspace{\wd5}#4}%
}}
\DeclareSymbolFont{cyrletters}{OT2}{wncyr}{m}{n}
\DeclareMathSymbol{\Sha}{\mathalpha}{cyrletters}{"58}
\DeclareMathSymbol{\Brusse}{\mathalpha}{cyrletters}{"42}
\newcommand{\n}{\mathbb{N}}
\newcommand{\z}{\mathbb{Z}}
\newcommand{\q}{\mathbb{Q}}
\renewcommand{\r}{\mathbb{R}}
\newcommand{\C}{\mathbb{C}}
\newcommand{\f}{\mathbb{F}}
\renewcommand{\ker}{\mathrm{Ker }}
\renewcommand{\hom}{\mathrm{Hom}}
\newcommand{\res}{\mathrm{Res}}
\renewcommand{\inf}{\mathrm{Inf}}
\newcommand{\aut}{\mathrm{Aut}\,}
\newcommand{\gm}{\mathbb{G}_{\mathrm{m}}}
\newcommand{\br}{\mathrm{Br}\,}
\newcommand{\un}{\mathrm{un}}
\newcommand{\brnr}{\mathrm{Br}_{\un}}
\newcommand{\bro}{\mathrm{Br}_0}
\newcommand{\brone}{\mathrm{Br}_1}
\newcommand{\al}{\mathrm{al}}
\newcommand{\bral}{\mathrm{Br}_{\al}}
\newcommand{\brnral}{\mathrm{Br}_{\un,\al}}
\newcommand{\gal}{\mathrm{Gal}}
\newcommand{\sln}{\mathrm{SL}_n}
\newcommand{\pic}{\mathrm{Pic}\,}
\newcommand{\sha}[3]{\Sha^{#1}_{\mathrm{#2},\mathrm{#3}}}
\newcommand{\subgps}[3]{\asd{}{\mathrm{#1}}{}{\mathrm{#2}}{(#3)}}
\newcommand{\ab}{\mathrm{ab}}
\newcommand{\cyc}{\mathrm{cyc}}
\newcommand{\scyc}{\mathrm{scyc}}
\newcommand{\bic}{\mathrm{bic}}
\renewcommand{\int}{\mathrm{int}}
\title{The unramified Brauer group of homogeneous spaces with finite stabilizer}
\author{Giancarlo Lucchini Arteche\\[5mm]
{\it\small Departamento de Matem\'aticas, Facultad de Ciencias, Universidad de Chile}\\
{\it\small Las Palmeras 3425, \~Nu\~noa, Santiago, Chile}\\
{\small luco@uchile.cl}
}
\date{}
\begin{document}

\maketitle

\begin{abstract}
We give formulas for calculating the unramified Brauer group of a homogeneous space $X$ of a semisimple simply connected group $G$ with finite geometric stabilizer $\bar F$ over a wide family of fields of characteristic 0. When $k$ is a number field, we use these formulas in order to study the Brauer-Manin obstruction to the Hasse principle and weak approximation. We prove in particular that the Brauer-Manin pairing is constant on $X(k_v)$ for every $v$ outside from an explicit finite set of non archimedean places of $k$.\\
\noindent{\bf Keywords :}  Brauer group, homogeneous spaces, finite groups, rational points.\\
{\bf MSC classes:} 14F22, 14M17, 14G20, 14G25.
\end{abstract}

\section{Introduction}
The Brauer group $\br X$ of a (smooth, geometrically integral) variety $X$ over a field $k$ is defined as the second \'etale cohomology group $H^2(X,\gm)$. A variant of this group is the \emph{unramified Brauer group} $\brnr X\subset\br X$. The latter has the advantage of being a stably birational invariant (and as such, it admits a definition that only depends on the fields $k(X)$ and $k$, cf.~\cite{ColliotSantaBarbara}). Moreover, it corresponds to the Brauer group of a smooth compactification of $X$ and as such it can be used for the definition of the Brauer-Manin obstruction to the Hasse principle and weak approximation. Since these arithmetic properties and the corresponding obstruction involve both global and local fields, it is important to develop formulas for $\brnr X$ over arbitrary fields.\\

In this paper we are interested in the unramified Brauer group of homogeneous spaces of (connected) linear groups. In this context, when $X$ is a homogeneous space of a linear group $G$ with connected or abelian stabilizer (in general, of ``ssumult'' type), Borovoi, Demarche and Harari have given formulas for the ``algebraic part'' of $\brnr X$ in \cite{BDH}. The same part in the case of arbitrary stabilizers was covered by the author in \cite{GLABrnral2}, but only for a semisimple simply connected group $G$. It is worth noting that, for connected and abelian stabilizers, the algebraic part corresponds to the whole group $\brnr X$ (i.e.~there is no ``transcendental part''). This is not true anymore for general non connected stabilizers, in particular for finite stabilizers, as it was proven over $k=\C$ by Bogomolov in \cite{BogomolovBrnr1}. Passing then to the whole group $\brnr X$ is a completely different task and it is only recently that Colliot-Th\'el\`ene managed to do this in \cite{ColliotBrnr} for homogeneous spaces of $G=\sln$ with finite \emph{constant} stabilizer (in particular $X(k)\neq\emptyset$) by generalizing Bogomolov's method. In this paper, we use the same methods in order to generalize this result in three directions:
\begin{itemize}\setlength{\itemsep}{0em}
\item we replace $\sln$ with a semisimple simply connected group (although this is a simple application of a remark by Colliot-Th\'el\`ene himself, cf.~the appendix of \cite{GLABrnral2});
\item we do not assume the finite stabilizer to be constant;
\item we do not even assume that there are $k$-points in $X$.
\end{itemize}
The price we have to pay for such a generalization is a small restriction on the base field: namely we have to avoid a certain family of fields which, for lack of a better term, we have named \emph{essentially real fields}. These include all real-closed fields, but not much more, cf.~section \ref{section ess real}. In particular, all global fields and non-archimedean local fields, for which we want to find arithmetic applications, are considered in our formulas. Moreover, the general intermediate results in this paper are enough to get the expected application for the field of real numbers.\\

The structure of the article is as follows. In section \ref{section notations} we fix notations, give the definition of essentially real fields and we study some subgroups of a finite gerb which are necessary to state the main formulas. In section \ref{section FX} we interpret the group $\br X$ in terms of group cohomology using a natural gerb that is associated to any homogeneous space $X$ (which is simply the fundamental group of $X$). We also reinterpret the Brauer-Manin pairing in terms of restrictions in group cohomology. In section \ref{section abelian} we study the particular case of finite abelian stabilizers, for which we prove the triviality of $\brnr X$ under some extra hypotheses. This is the heart of Bogomolov and Colliot-Th\'el\`ene's method. We finally get to the main formulas in section \ref{section main thm} and give an arithmetic application in section \ref{section app}, which consists in giving an explicit set of bad places away from which the Brauer-Manin obstruction does not detect anything.

\paragraph*{Acknowledgements} This work was partially supported by Fondecyt Grant 11170016 and PAI Grant 79170034.

\section{Notations and preliminaries}\label{section notations}
All throughout the text, $k$ denotes a field of characteristic 0. We denote $\bar k$ an algebraic closure of $k$ and $\Gamma_k$ the absolute Galois group $\gal(\bar k/k)$. For $L/k$ any other Galois extension, we denote the corresponding Galois group by $\Gamma_{L/k}$. For $X$ a $k$-variety, we denote by $\bar X$ the $\bar k$-variety $X\times_k\bar k$. In particular, we will be denoting every $\bar k$-variety by a letter with a bar. We will denote by $G$ a semisimple simply connected group and by $F$ (resp.~$\bar F$) a finite $k$-group (resp.~$\bar k$-group). For such a finite group, \emph{we will make the following abuse of notation}: we will always identify the finite algebraic group with the abstract group of its $\bar k$-points (eventually with an action of $\Gamma_k$), thus avoiding everywhere the notation $F(\bar k)$ or $\bar F(\bar k)$. We think that the context will be clear enough for the reader to see whether we mean the group or the scheme.

We will denote by $X$ or $Y$ a $k$-variety, typically a homogeneous space. For such a variety, the Brauer group is defined as $\br X:=H^2_{\text{\'et}}(X,\gm)$. This group comes with a filtration
\[\bro X\subset \brone X\subset \br X,\]
where $\bro X$ corresponds to the image of the morphism $\br k\to\br X$ induced by the structure morphism and $\brone X$ corresponds to the kernel of the morphism $\br X\to\br \bar X$ induced by base change. The quotient $\bral X:=\brone X/\bro X$ is called the algebraic Brauer group. The unramified Brauer group $\brnr X$ is a subgroup of $\br X$ that corresponds to the Brauer group of a smooth compactification of $X$ (which always exists due to Hironaka's theorem). For a definition that is only dependent on the function field of $X$, see for instance \cite{ColliotSantaBarbara}. One can similarly define the unramified algebraic group $\brnral X$.

By a \emph{procyclic} group, we mean a commutative profinite group which is topologically generated by a single element. If $\Gamma$ is procyclic, then one can always canonically write $\Gamma\cong\prod_p\Gamma_p$, where $p$ ranges through the set of all prime numbers and $\Gamma_p$ is a procyclic pro-$p$-group, hence a quotient of $(\z_p,+)$. By a \emph{strictly procyclic} group, we mean a procyclic group $\Gamma$ such that each one of its $p$-Sylow subgroups $\Gamma_p$ is either trivial or infinite (and hence isomorphic to $\z_p$).

\subsection{Essentially real fields}\label{section ess real}
Recall that, by the Artin-Schreier Theorem, an absolute Galois group is finite if and only if it is isomorphic to $\z/2\z$. In particular, a finite closed subgroup of an arbitrary Galois group must be isomorphic to $\z/2\z$.

\begin{defi}
We will say that a field $k$ is \emph{essentially real} if the 2-Sylow subgroups of $\Gamma_k$ are finite and \emph{non trivial} (so a fortiori of order 2).
\end{defi}

Examples of essentially real fields are real closed fields (including $\r$), but also fields like $k=\bigcup_{n\in\n}\r((t^{\frac{1}{2^n}}))$. These fields happen to be the only ones for which our techniques for calculating the unramified Brauer group of a homogeneous space do not work. Note however that every such field has a unique quadratic extension for which our formulas work, so a simple restriction-corestriction argument tells us that we can calculate the prime-to-2 part of the unramified Brauer group for these fields as well. Moreover, aside from the real numbers, there seem to be no other interesting fields if one has arithmetic applications in mind. And we can use some of the results here below that are independent of the field to get the expected applications in the real case, cf.~section \ref{section app}.

\subsection{Subgroups of finite gerbs}
Let $k$ be a field of characteristic 0, $\bar F$ be a finite $\bar k$-group. A $(\bar F,k)$-gerb (or simply a finite gerb) in this context is just a group extension
\[1\to \bar F\xrightarrow{\iota} E \xrightarrow{\pi} \Gamma_k\to 1,\]
such that $\pi$ is continuous. Note that in particular $E$ is a profinite group. Following Springer, one can naturally associate a finite gerb to every homogeneous space with finite stabilizer (cf.~\cite{SpringerH2}). We will do the same thing here below by considering the \'etale fundamental group. Then, in order to calculate the unramified Bruaer group of the homogeneous space, we will consider certain families of subgroups of a given gerb $E$, defined as follows. Recall that a \emph{strictly procyclic} group is a procyclic group such that each one of its $p$-Sylow subgroups is either trivial or infinite.

\begin{defi}\label{defi shas}
Let $E$ be a $(\bar F,k)$-gerb as above and let $A$ be a (discrete) $E$-module. For $\mathrm{x}\in\{\ab,\bic,\cyc\}$ and $\mathrm{y}\in\{\scyc,0\}$, we denote by $\subgps{\mathrm{x}}{\mathrm{y}}{E}$ the set of closed subgroups $D$ of $E$ such that
\begin{itemize}\setlength{\itemsep}{0em}
\item $\iota^{-1}(D)=D\cap \bar F$ is abelian ($\mathrm{x}=\ab$), resp.~bicyclic ($\mathrm{x}=\bic$), resp.~cyclic ($\mathrm{x}=\cyc$);
\item $\pi(D)$ is strictly procyclic ($\mathrm{y}=\scyc$), resp.~trivial ($\mathrm{y}=0$).
\end{itemize}
We also set, for $i\geq 1$:
\begin{align*}
\sha{i}{\mathrm{x}}{\mathrm{y}}(E,A) &:=\ker\left(H^i(E,A)\to \prod_{D\in _\mathrm{x}(E)_\mathrm{y}}H^i(D,A)\right).
\end{align*}
\end{defi}

For example, $\sha{2}{\bic}{\scyc}(E,A)$ is the subgroup of $H^2(E,A)$ of elements that are trivialized by restriction to every closed subgroup $D$ of $E$ such that $\pi(D)$ is strictly procyclic and $D\cap \bar F$ is bicyclic; while $\sha{2}{\ab}{0}(E,A)$ is the subgroup of elements that are trivialized by restriction to every closed abelian subgroup $D$ of $\bar F$ (seen as a subgroup of $E$).

\section{Brauer group and Brauer pairing in group cohomology}\label{section FX}
Let $X$ be a homogeneous space of a semisimple simply connected group $G$ with finite geometric stabilizer $\bar F$. We claim that there is a canonical $(\bar F,k)$-gerb $\bar F^X$ associated to $X$, i.e.~a group extension
\begin{equation}\label{extension FX}
1\to \bar F\to \bar F^X\to \Gamma_k\to 1,
\end{equation}
In fact, such a group can be obtained by taking the \'etale fundamental group of $X$. Consider the Galois cover $\bar X\to X$ and fix a geometric point $\bar x$ of $\bar X$. Then we get an isomorphism $\bar X\cong \bar F\backslash \bar G$, where $\bar F$ is the $\bar k$-group stabilizing $\bar x$. In particular, $\bar G\to\bar X$ is an $\bar F$-torsor and hence a Galois cover. Since $\bar G$ is simply connected, we know by the theory of \'etale covers (see \cite[Exp. V]{SGA1}) that:
\begin{itemize}\setlength{\itemsep}{0em}
\item the group $\pi_1(X,\bar x)$ corresponds to the automorphism group of the cover $\bar G\to X$;
\item the group $\bar F$ can be seen as a subgroup of $\pi_1(X,\bar x)$, corresponding to those automorphisms fixing $\bar X$;
\item since $\bar X\to X$ is Galois, the subgroup $\bar F$ is normal in $\pi_1(X,\bar x)$;
\item the quotient of $\pi_1(X,\bar x)$ by $\bar F$ is isomorphic to $\Gamma_k$, the group of automorphisms of $\bar X\to X$.
\end{itemize}
For such a homogeneous space, we define then $\bar F^X$ to be the group $\pi_1(X,\bar x)$, which fits in the group extension \eqref{extension FX} and hence corresponds to a finite gerb. It is well-known that the isomorphism class of such a group does not depend on the choice of $\bar x$ and it is hence canonically associated to $X$.\\

Note that if $X(k)\neq\emptyset$, then we may consider the \'etale cover $G\to X$ defined by sending $g\in G$ to $xg\in X$ for some $x\in X(k)$. This is in fact an $F$-torsor, where $F$ is the stabilizer of $x$ and hence a $k$-group. Once again, by the theory of \'etale covers, $G\to X$ corresponds to a subgroup of $F^X$ which is easily seen to correspond to a splitting of sequence \eqref{extension FX}. As such, it defines an action of $\Gamma_k$ on $F$ by conjugation in $F^X$ which turns out to be the natural action of $\Gamma$ on the geometric points of $F$ as a $k$-group.

In other words, if $X(k)\neq\emptyset$, the natural extension \eqref{extension FX} associated to $X$ is the semidirect product of $\Gamma_k$ and $F$ for the natural action of $\Gamma_k$ on the stabilizer $F$ of a $k$-point.

\begin{rem}
The class of the extension we have just presented is the \emph{Springer class} associated to the homogeneous space $X$ in the nonabelian 2-cohomology set $H^2(\bar k/k,\bar F\,\mathrm{rel}\,G)$, as defined by Springer in \cite{SpringerH2}. Elements in this set are \emph{gerbs} (which justifies our use of the word), as introduced by Giraud in \cite{Giraud}.

Note also that the use of either the \'etale fundamental group or the Springer class can be avoided in the case where $X(k)\neq\emptyset$, where one can just put $F^X:=F\rtimes\Gamma_k$ by definition. The need for these tools appears in the case where we do not have a rational point, case in which (to our knowledge) no formula for calculating the unramified Brauer group of a homogeneous space with finite stabilizer had been given until now, with the exception of \cite{Demarche} for the algebraic part $\brnral X$ in the particular case where $k$ is a number field and $G=\sln$.
\end{rem}

The proposition below is also a simple consequence of the theory of \'etale covers as explained in \cite[Exp. V]{SGA1}.

\begin{pro}\label{prop subgps}
Let $k$, $G$ and $X$ be as above. Let $E$ be a closed subgroup of $\bar F^X$ and denote $\bar H=E\cap \bar F$. Denote by $L$ the subfield of $\bar k$ fixed by the image of $E$ in $\Gamma_k$. Then the evident $\bar k$-morphism $\bar H\backslash\bar G\to\bar X=\bar F\backslash\bar G$ descends to a $G_L$-equivariant $L$-morphism $Y\to X_L$, where $Y$ is a right homogeneous space of $G_L$ with geometric stabilizer $\bar H$. Moreover, one has $E\cong \bar H^Y$.\qed
\end{pro}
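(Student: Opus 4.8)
The plan is to realize $Y$ through the Galois theory of the étale fundamental group and then to transport the $G$-action across the resulting cover. Write $\pi\colon\bar F^X\to\Gamma_k$ for the projection of the gerb and set $\Gamma_L:=\gal(\bar k/L)$ for the image of $E$, which is closed because $E$ is compact and $\pi$ continuous. First I would identify $\pi_1(X_L,\bar x)$ with $\pi^{-1}(\Gamma_L)\subseteq\bar F^X$: the base change $X_L\to X$ induces an injection on fundamental groups whose image is exactly $\pi^{-1}(\Gamma_L)$, compatibly with the fundamental exact sequence $1\to\bar F\to\pi_1(X_L,\bar x)\to\Gamma_L\to 1$. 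By hypothesis $E\subseteq\pi^{-1}(\Gamma_L)$, with $E\cap\bar F=\bar H$ and $E$ surjecting onto $\Gamma_L$; since $E$ and $\pi^{-1}(\Gamma_L)$ both surject onto $\Gamma_L$ with kernels $\bar H$ and $\bar F$, one gets $\pi^{-1}(\Gamma_L)=E\bar F$ and $[\pi^{-1}(\Gamma_L):E]=[\bar F:\bar H]<\infty$. A finite-index closed subgroup of a profinite group is open, so $E$ is open and corresponds, via \cite[Exp.~V]{SGA1}, to a connected finite étale cover $f\colon Y\to X_L$ defined over $L$ with $\pi_1(Y,\bar y)\cong E$.

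Next I would compute the base change to $\bar k$. The cover $f$ corresponds to the $\pi_1(X_L,\bar x)$-set $\pi_1(X_L,\bar x)/E$, and restricting this action to $\pi_1(\bar X,\bar x)=\bar F$ describes $\bar Y=Y\times_L\bar k\to\bar X$. As $\pi_1(X_L,\bar x)=\bar F E$, the group $\bar F$ acts transitively on $\pi_1(X_L,\bar x)/E$ with stabilizer $\bar F\cap E=\bar H$, so this $\bar F$-set is $\bar F/\bar H$. Hence $\bar Y$ is connected and $\bar Y\to\bar X$ is the cover attached to the subgroup $\bar H\le\bar F=\pi_1(\bar X,\bar x)$, i.e.~the natural projection $\bar H\backslash\bar G\to\bar F\backslash\bar G=\bar X$. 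This identifies $\bar Y$ with $\bar H\backslash\bar G$ compatibly with $f$.

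The heart of the argument, and the step I expect to be the main obstacle, is lifting the right $G_L$-action $\alpha\colon X_L\times_L G_L\to X_L$ to $Y$. I would set $\beta:=\alpha\circ(f\times\id)\colon Y\times_L G_L\to X_L$ and lift it across $f$ using the lifting criterion of \cite[Exp.~V]{SGA1}. Here the simple-connectedness of $G$ is essential: since $\pi_1(\bar G,\bar e)=0$, one has $\pi_1(Y\times_L G_L,(\bar y,\bar e))\cong\pi_1(Y,\bar y)=E$, and because $\alpha$ restricts to the identity on $X_L\times\{e\}$ it induces the identity on $\pi_1(X_L,\bar x)$; hence $\beta_*$ is the inclusion $E\hookrightarrow\pi_1(X_L,\bar x)$, whose image lies in $\pi_1(Y,\bar y)=E$. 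The criterion then yields a unique lift $\tilde\beta\colon Y\times_L G_L\to Y$ with $f\circ\tilde\beta=\beta$ and $\tilde\beta(\bar y,\bar e)=\bar y$. Uniqueness of lifts forces the unit and associativity identities, so $\tilde\beta$ is a right $G_L$-action and $f$ is $G_L$-equivariant.

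Finally I would record that $Y$ is a homogeneous space with geometric stabilizer $\bar H$: after base change $\tilde\beta$ becomes the standard right action on $\bar H\backslash\bar G$, which is transitive with stabilizer $\bar H$ at the class of $\bar e$, and both transitivity and the stabilizer may be checked over $\bar k$. This makes $\bar H^Y$ well defined and equal to $\pi_1(Y,\bar y)\cong E$, completing the proof. The only genuinely delicate point is the action-lifting of the third paragraph, where simple-connectedness of $G$ is used in an essential way; everything else is a formal application of the Galois correspondence and of base-change compatibility for $\pi_1$.
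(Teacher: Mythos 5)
Your proof is correct, and it is precisely the argument the paper has in mind: the paper states this proposition without proof as ``a simple consequence of the theory of \'etale covers as explained in [SGA1, Exp.~V]'', and your write-up fills in exactly that Galois-correspondence argument, including the one non-formal step (lifting the $G_L$-action across the cover via simple connectedness of $G$ and uniqueness of lifts). Nothing in your approach deviates from the intended route; it is a complete, detailed version of it.
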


We will use the gerb $\bar F^X$ to study the unramified Brauer group $\brnr X$ as follows.

\begin{pro}\label{prop Brnr et H2}
Let $k$, $X$ and $G$ be as above and let $\bar F^X$ be the canonical gerb associated to $X$. Then
\[\br X=H^2(\bar F^X,\bar k^*),\]
where $\bar F^X$ acts on $\bar k^*$ via its quotient $\Gamma_k$ in the obvious way. Moreover, for any algebraic extension $L/k$ and any homogeneous space $Y$ of $G_L$ with finite stabilizer $\bar H$ equipped with a $G_L$-equivariant map $Y\to X_L$, one has a commutative diagram
\[\xymatrix{
\brnr X \ar[r] \ar[d]^\res & H^2(\bar F^X,\bar k^*) \ar[d]^\res \\
\brnr Y \ar[r] & H^2(\bar H^Y,\bar k^*).
}\]
\end{pro}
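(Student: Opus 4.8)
The plan is to deduce both assertions from the Hochschild--Serre spectral sequence attached to the (pro-finite) Galois cover $\bar G\to X$. As explained just before the statement, since $\bar G$ is simply connected this cover is the universal cover of $X$, and its group of deck transformations is precisely $\bar F^X=\pi_1(X,\bar x)$. First I would write the spectral sequence
\[E_2^{p,q}=H^p\bigl(\bar F^X,H^q_{\text{\'et}}(\bar G,\gm)\bigr)\Rightarrow H^{p+q}_{\text{\'et}}(X,\gm),\]
with $\bar F^X$ acting continuously, and compute the low-degree coefficients over $\bar k$: one has $H^0_{\text{\'et}}(\bar G,\gm)=\bar k[\bar G]^*=\bar k^*$ because a semisimple group has no nontrivial characters, $H^1_{\text{\'et}}(\bar G,\gm)=\pic\bar G=0$ because $\bar G$ is simply connected, and $H^2_{\text{\'et}}(\bar G,\gm)=\br\bar G=0$, this last vanishing being a standard fact for simply connected semisimple groups over an algebraically closed field. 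The deck group $\bar F^X$ acts on $H^0=\bar k^*$ through its quotient $\Gamma_k$, the subgroup $\bar F$ acting trivially on constants. Hence $E_2^{0,2}=E_2^{1,1}=0$, and the only differential that could reach $E_2^{2,0}$ starts from $E_2^{0,1}=H^0(\bar F^X,\pic\bar G)=0$, while there are no outgoing differentials from $E_2^{2,0}$ for degree reasons. Therefore $\br X=E_\infty^{2,0}=E_2^{2,0}=H^2(\bar F^X,\bar k^*)$ with the announced action, which is the first assertion.

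For the diagram, the $G_L$-equivariant dominant morphism $Y\to X_L\to X$ induces a homomorphism of \'etale fundamental groups $\bar H^Y=\pi_1(Y,\bar y)\to\pi_1(X,\bar x)=\bar F^X$; by Proposition \ref{prop subgps} this identifies $\bar H^Y$ with the closed subgroup $E\subset\bar F^X$ satisfying $E\cap\bar F=\bar H$ and having fixed field $L$. The universal cover $\bar G\to Y$ then maps compatibly to $\bar G\to X$, producing a morphism of Galois covers lying over $\pi_1(Y)\to\bar F^X$, i.e.~a morphism of the pairs (cover, group) to which the two spectral sequences are attached.

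By functoriality of the Hochschild--Serre spectral sequence for this morphism of pairs, the edge isomorphisms from the previous step sit in a commutative square whose right vertical arrow is the restriction $H^2(\bar F^X,\bar k^*)\to H^2(E,\bar k^*)=H^2(\bar H^Y,\bar k^*)$ and whose left vertical arrow is the pullback $\br X\to\br Y$ (the composite $\br X\to\br X_L\to\br Y$). It then suffices to observe that this pullback carries unramified classes to unramified classes: base change along the algebraic extension $L/k$ preserves $\brnr$, and pullback along the dominant morphism $Y\to X_L$ of smooth varieties preserves the unramified condition by the valuative description of $\brnr$. Restricting the commutative square to the unramified subgroups $\brnr X$ and $\brnr Y$ yields exactly the asserted diagram.

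The main obstacle I expect is technical rather than conceptual: one must set up the Hochschild--Serre spectral sequence and its functoriality carefully for the pro-finite cover $\bar G\to X$, using continuous cohomology of the profinite group $\bar F^X$, and one must check that the abstract restriction to $H^2(E,-)$ genuinely corresponds, via Proposition \ref{prop subgps}, to the geometric map $Y\to X$---together with the (routine but necessary) verification that $\res\colon\brnr X\to\brnr Y$ is well defined.
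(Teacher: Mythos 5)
Your proof is correct and follows essentially the same route as the paper: the Hochschild--Serre spectral sequence for the $\bar F^X$-cover $\bar G\to X$, the vanishing of $\bar k[\bar G]^*/\bar k^*$, $\pic\bar G$ and $\br\bar G$ (the paper cites Gille for the last), and then functoriality of the spectral sequence together with functoriality of $\brnr$ under dominant morphisms (the paper cites Colliot-Th\'el\`ene--Sansuc) combined with Proposition \ref{prop subgps} for the commutative square. The only cosmetic difference is that you phrase the degeneration via vanishing $E_2$ terms and edge maps, while the paper uses the low-degree exact sequence $0\to H^2(\bar F^X,\bar k^*)\to\br X\to\br\bar G$.
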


\begin{proof}
By construction, we know that the arrow $\bar G\to X$ is an $\bar F^X$-cover. We have the right then to consider the Hochschild-Serre spectral sequence in \'etale cohomology:
\[H^p(\bar F^X,H^q(\bar G,\gm))\Rightarrow H^{p+q}(X,\gm).\]
Since $\pic(\bar G)=0$ and $\bar k[X]^*=\bar k^*$, we deduce from this spectral sequence the following exact sequence
\[0\to H^2(\bar F^X,\bar k^*)\to \br X \to \br \bar G,\]
where we note that $\bar F^X$ acts on $\bar k^*$ via its quotient $\Gamma_k$ in the obvious way. Now, by \cite[\S 0]{SGilleBrGss}, we know that $\br\bar G$ is trivial for semisimple simply connected groups. This gives the equality $\br X=H^2(\bar F^X,\bar k^*)$.

Finally, the unramified Brauer group is functorial for dominant morphisms  (cf.~\cite[Lem.~5.5]{ColliotSansucChili}) and the spectral sequence is compatible with restrictions. This, along with Proposition \ref{prop subgps}, proves the commutativity of the diagram.
\end{proof}

Let us study now how the classic Brauer-Manin pairing
\[\br X\times X(k)\to \br k,\]
behaves with respect to this new interpretation of $\br X$. Consider a point $x\in X(k)$. Then, as we remarked before, we get a homomorphic section $s_x:\Gamma_k\to \bar F^X$, so that $\bar F^X$ becomes a semi-direct product $\bar F\rtimes\Gamma_k$ for the action of $\Gamma_k$ on the stabilizer of $x$, which is a $k$-form of $\bar F$ that we note $F_x$. Such a point allows us moreover to see $X$ as a quotient $G/F_x$, hence we can consider the coboundary morphism $\delta_x:X(k)\to H^1(k,F_x)$.

Now, it is well-known that the set $H^1(k,F_x)$ classifies, up to conjugation, the continuous sections $\Gamma_k\to \bar F^X$. Then any point $y\in X(k)$ gives us as the same time a new homomorphic section $s_y$ of $\bar F^X$ and a class $\delta_x(y)\in H^1(k,F_x)$. The following result is then an easy exercise:

\begin{lem}
Let $y\in X(k)$. Then the class in $H^1(k,F_x)$ corresponding to the homomorphic section $s_y$ is $\delta_x(y)$.\qed
\end{lem}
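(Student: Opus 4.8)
The plan is to reduce the statement to the computation of a single nonabelian cocycle and to identify it, on one side, with the cocycle defining $\delta_x(y)$ and, on the other, with the cocycle attached to the section $s_y$. First recall that the rational point $x$ furnishes the splitting $s_x$, which identifies $\bar F^X$ with $F_x\rtimes\Gamma_k$, where $\Gamma_k$ acts on $F_x=\bar F$ by conjugation through $s_x$; as noted in the text this conjugation is exactly the Galois action making $F_x$ the $k$-group $\mathrm{Stab}(x)$. Since the extension $1\to\bar F\to\bar F^X\to\Gamma_k\to 1$ now carries the distinguished splitting $s_x$, the standard description of continuous sections up to $\bar F$-conjugacy yields a bijection with $H^1(k,F_x)$, under which a section $s$ corresponds to the class of the $1$-cocycle $\sigma\mapsto s(\sigma)s_x(\sigma)^{-1}\in\bar F$. (The cocycle identity $c_{\sigma\tau}=c_\sigma\cdot{}^{\sigma}c_\tau$ follows at once from $s$ and $s_x$ being homomorphisms, the action ${}^{\sigma}(-)=s_x(\sigma)(-)s_x(\sigma)^{-1}$ being the form action just described.) Thus it suffices to compute $a_\sigma:=s_y(\sigma)s_x(\sigma)^{-1}$ and to show $[a_\sigma]=\delta_x(y)$.

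Next I would write out $\delta_x(y)$ explicitly. Using $x$ we view $X=G/F_x$ with orbit map $\pi\colon G\to X$, $g\mapsto gx$, which is a right $F_x$-torsor. Given $y\in X(k)$, choose $\bar g_0\in G(\bar k)$ with $\bar g_0 x=y$; since $x$ and $y$ are $k$-rational, $\sigma(\bar g_0)$ is again a lift of $y$, so $\bar g_0^{-1}\sigma(\bar g_0)\in F_x(\bar k)=\bar F$, and by definition $\delta_x(y)=[\sigma\mapsto\bar g_0^{-1}\sigma(\bar g_0)]$. A direct check confirms that this is a $1$-cocycle for the Galois (= form) action $f\mapsto\sigma(f)$ on $F_x$.

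The heart of the argument is to identify $s_y$ through the fiber of the cover $\bar G\to X$, using the theory of \'etale covers as in Proposition \ref{prop subgps} and \cite[Exp. V]{SGA1}. By construction $\bar F^X=\mathrm{Aut}(\bar G/X)$, and on $\bar k$-points the fiber of $\bar G\to X$ over the point $y$ is precisely the coset $\{\bar g:\bar g x=y\}=\bar g_0\bar F$, that is, the fiber over $y$ of the $F_x$-torsor $\pi$. The section attached to the rational point $y$ is the one associated to this pointed fiber with base point $\bar g_0$, and unwinding the bijection of the first paragraph at the level of fibers shows that $s_y$ corresponds to the class of the torsor-fiber $\bar g_0\bar F$, which is exactly $\delta_x(y)$. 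Concretely, one obtains $a_\sigma=s_y(\sigma)s_x(\sigma)^{-1}=\bar g_0^{-1}\sigma(\bar g_0)$, and comparison with the previous paragraph gives $[a_\sigma]=\delta_x(y)$, as desired.

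I expect the only delicate point to be the bookkeeping of conventions: one must fix compatibly the side of the $\bar F$-torsor structure on $\bar G$, the semilinear action of the canonical lift $s_x(\sigma)=1\otimes\sigma$ on the $\bar k$-points of the fiber, and the sign in the connecting map $\delta_x$. With a consistent choice the two cocycles agree on the nose, rather than merely up to inversion or $\bar F$-conjugacy; this is where an inattentive computation would produce a spurious inverse. Everything else is a formal torsor manipulation, which is precisely why the statement is an easy exercise once the fibered picture is in place.
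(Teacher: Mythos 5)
Your proof is correct; note that the paper offers no proof of this lemma at all (it is stated with a \qed as ``an easy exercise''), and your argument---classifying sections by the cocycles $\sigma\mapsto s(\sigma)s_x(\sigma)^{-1}$ relative to the distinguished splitting $s_x$, then computing the cocycle of $s_y$ through the fiber of $\bar G\to X$ over $y$ via the Galois correspondence for covers---is exactly the intended unwinding. The key formula $s_y(\sigma)s_x(\sigma)^{-1}=\bar g_0^{-1}\sigma(\bar g_0)$ does check out under your (left-coset) conventions and matches the standard cocycle representing $\delta_x(y)$; in the paper's right-homogeneous-space convention ($y=x\bar g_0$) the same computation yields $\bar g_0\,\sigma(\bar g_0)^{-1}$, which is the corresponding representative of $\delta_x(y)$ there, so the convention caveat you raise is real but harmless.
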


Thus for $y\in X(k)$, we obtain the following restriction morphism
\[H^2(\bar F^X,\bar k^*)\xrightarrow{s_y^*} H^2(\Gamma_k,\bar k^*)=\br k,\]
which only depends on the class $\delta_x(y)\in H^1(k,F_x)$: indeed, any other point having the same image by $\delta_x$ corresponds to a section that is conjugate to $s_y$ and conjugation induces the identity on $H^2(\bar F^X,\bar k^*)$. We have thus found compatible pairings:
\begin{equation}\label{eq acc GLA}
\xymatrix{
H^2(\bar F^X,\bar k^*) \times \quad X(k)\quad \ar[r] \ar@<7.5ex>[d]^{\delta_x} \ar@<-5ex>@{=}[d] & \br k \ar@{=}[d] \\
H^2(\bar F^X,\bar k^*)\times H^1(k,F_x)   \ar[r] & \br k,
}
\end{equation}
which are easily seen to be functorial in $k$.

\begin{pro}\label{prop acc GLA et BM}
The pairing on the upper line of \eqref{eq acc GLA} is the Brauer-Manin pairing.
\end{pro}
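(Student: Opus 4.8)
The plan is to show that the pairing defined cohomologically in the upper line of diagram \eqref{eq acc GLA} agrees with the classical Brauer–Manin evaluation pairing $\br X\times X(k)\to\br k$, which sends $(\alpha,y)$ to the pullback $y^*\alpha\in\br k$ along the morphism $y\colon\operatorname{Spec}k\to X$. The key observation is that both pairings are computed by \emph{restriction along a point}, so the task reduces to identifying, for a fixed $y\in X(k)$, the étale-cohomological evaluation $y^*$ with the group-cohomological section map $s_y^*$ under the isomorphism $\br X=H^2(\bar F^X,\bar k^*)$ of Proposition \ref{prop Brnr et H2}.

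First I would set up the comparison functorially. A point $y\in X(k)$ is a morphism $\operatorname{Spec}k\to X$, and evaluating a Brauer class at $y$ is by definition the functorial image of $\alpha$ under $y^*\colon\br X\to\br k$. On the cohomological side, Proposition \ref{prop Brnr et H2} gives a commutative restriction diagram for any $G_L$-equivariant map $Y\to X_L$ of homogeneous spaces; the point $\operatorname{Spec}k$ may itself be regarded as the homogeneous space $Y=F_y\backslash G$ viewed over the orbit of $y$, whose associated gerb $\bar F^Y$ is exactly the image of the section $s_y\colon\Gamma_k\to\bar F^X$, i.e.~a copy of $\Gamma_k$ inside $\bar F^X$. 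Applying the functoriality of Proposition \ref{prop Brnr et H2} to this inclusion yields precisely the commutative square
\[\xymatrix{
\br X \ar[r]^-{\sim} \ar[d]^{y^*} & H^2(\bar F^X,\bar k^*) \ar[d]^{s_y^*} \\
\br k \ar[r]^-{\sim} & H^2(\Gamma_k,\bar k^*),
}\]
where the bottom horizontal arrow is the standard identification $\br k=H^2(\Gamma_k,\bar k^*)$. This identifies the two vertical maps, which is the whole content of the proposition once one checks that the left map $y^*$ \emph{is} the Brauer–Manin evaluation (true by definition) and that the right map is the $s_y^*$ appearing in \eqref{eq acc GLA}.

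The second step is to verify that the geometric point $\operatorname{Spec}k\xrightarrow{y}X$ does correspond, under the étale-fundamental-group dictionary used to define $\bar F^X$, to the subgroup $s_y(\Gamma_k)\subset\bar F^X$ — equivalently, that the fiber of $\bar G\to X$ over $y$ is the $\bar F$-torsor whose class is $\delta_x(y)$ and whose induced section is $s_y$. This is exactly the identification already recorded in the Lemma preceding the statement (that the section $s_y$ corresponds to $\delta_x(y)$), together with the compatibility of the Hochschild–Serre edge map with the geometric fiber. Concretely, I would trace the edge morphism $H^2(\bar F^X,\bar k^*)\hookrightarrow\br X$ of the spectral sequence through base change along $y$ and observe that restriction to the decomposition subgroup at $y$ on the cohomology side matches pullback of the cover $\bar G\to X$ to the fiber over $y$ on the geometric side.

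The main obstacle, and the only genuinely non-formal point, is this last compatibility between the abstract Hochschild–Serre edge map and the concrete fiberwise evaluation: one must be careful that the identification $\br X=H^2(\bar F^X,\bar k^*)$ is natural enough that restricting a class along the section $s_y$ computes the same element of $\br k$ as pulling the corresponding Azumaya algebra (or $\gm$-gerbe) back along $y$. I expect this to follow from the naturality of the Hochschild–Serre spectral sequence with respect to the morphism of sites induced by $y$, reducing everything to the degenerate spectral sequence over $\operatorname{Spec}k$ (where $H^2(\Gamma_k,\bar k^*)=\br k$ directly). Once this naturality is in hand, the commutativity of \eqref{eq acc GLA} and the agreement with the Brauer–Manin pairing are immediate, so the proof is essentially a careful bookkeeping of functorialities rather than a new computation.
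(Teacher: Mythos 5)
Your overall strategy --- pulling back the Hochschild--Serre spectral sequence of the cover $\bar G\to X$ along the point $y$ --- is the same as the paper's, but your proposal has a genuine gap exactly at the step you yourself flag as ``the main obstacle'', and the mechanism you propose for closing it would not work as stated. First, the appeal to the functoriality in Proposition \ref{prop Brnr et H2} with $Y=\mathrm{Spec}\,k$ is inapplicable: $\mathrm{Spec}\,k$ is not a homogeneous space of $G$ with \emph{finite} stabilizer (the stabilizer of its unique point is all of $\bar G$), so that proposition produces no commutative square for you. Second, and more importantly, naturality of Hochschild--Serre along $y$ does not by itself ``reduce everything to the degenerate spectral sequence over $\mathrm{Spec}\,k$, where $H^2(\Gamma_k,\bar k^*)=\br k$ directly''. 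What naturality actually gives is a map to the spectral sequence $H^p(\bar F^X,H^q(\bar P,\gm))\Rightarrow H^{p+q}(k,\gm)$, where $\bar P=\bar G\times_X\mathrm{Spec}\,k$ is the fiber over the point: this spectral sequence still has the group $\bar F^X$ (not $\Gamma_k$), and its coefficients are the cohomology of a \emph{disconnected} scheme, namely $n=|F|$ copies of $\mathrm{Spec}\,\bar k$ permuted by $F$.

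The missing idea --- which is the actual content of the paper's proof --- is the identification of the resulting edge map with $s_y^*$. Concretely, $H^0(\bar P,\gm)=(\bar k^*)^{\oplus n}$ is the induced module $\mathrm{Ind}^{s_y(\Gamma_k)}_{\bar F^X}(\bar k^*)$ (the stabilizer in $\bar F^X$ of a connected component of $\bar P$ is $s_y(\Gamma_k)$, which is where the Lemma relating $s_y$ to $\delta_x(y)$ enters), the map coming from the spectral sequence on $X$ is induced by the diagonal inclusion $\bar k^*\hookrightarrow(\bar k^*)^{\oplus n}$, and Shapiro's Lemma identifies the middle group so that the composite
\[
H^2(\bar F^X,\bar k^*)\to H^2\bigl(\bar F^X,(\bar k^*)^{\oplus n}\bigr)\xrightarrow{\ \sim\ } H^2\bigl(s_y(\Gamma_k),\bar k^*\bigr)=\br k
\]
is precisely restriction along $s_y$. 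Without this induced-module computation, nothing ties the group-theoretic map $s_y^*$ to the geometric evaluation $y^*$; ``careful bookkeeping of functorialities'' stops one step short of the point where a computation is genuinely needed. If you insert this Shapiro argument, your outline becomes the paper's proof.
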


\begin{proof}
We may assume that $X(k)\neq\emptyset$ (otherwise there is nothing to prove). In particular, we may assume that $X=G/F$ for some finite $k$-group $F$ (once again, this amounts to fixing a point in $X(k)$) and hence $\bar F^X=F^X$ is a semi-direct product as above.

Consider now, for $x\in X(k)$, the cartesian commutative diagram
\[\xymatrix@R=1em@C=1em{
\bar P \ar[r] \ar[d] & \bar G \ar[d] \\
P \ar[r] \ar[d] & G \ar[d] \\
\mathrm{Spec}\, k \ar[r]^>>>x & X,
}\]
where $\bar P$ is the trivial $F$-torsor over $\bar k$, i.e.~copies of $\mathrm{Spec}\,\bar k$ permutated by $F$. The diagram induces a morphism of Hochschild-Serre spectral sequences
\[\xymatrix@C=1.5em@R=1em{
H^p(F^X,H^q(\bar G,\gm)) \ar@{=>}[r] \ar[d] & H^{p+q}(X,\gm) \ar[d]^{x^*} \\
H^p(F^X,H^q(\bar P,\gm)) \ar@{=>}[r] & H^{p+q}(k,\gm),
}\]
which gives us in particular the following commutative square
\[\xymatrix@R=1em{
H^2(F^X,H^0(\bar G,\gm)) \ar[d] \ar[r] & H^2(X,\gm) \ar[d]^{x^*} \\
H^2(F^X,H^0(\bar P,\gm)) \ar[r] & H^2(k,\gm).
}\]
The Brauer-Manin pairing for the group $H^2(F^X,\bar k^*)$ with respect to the point $x$ corresponds to passing through the upper part of the diagram, whereas the pairing in \eqref{eq acc GLA} corresponds to passing through the lower part. Indeed, this part of the diagram can be restated as
\[H^2(F^X,\bar k^*)\to H^2(F^X,(\bar k^*)^{\oplus n})\to \br k,\]
where $n$ denotes the order of $F$ and where the left arrow is defined by the diagonal inclusion. A direct computation shows that the $F^X$-module in the middle term is the induced module $\mathrm{Ind}^{s_x(\Gamma_k)}_{F^X}(\bar k^*)$ and hence the middle term is just $H^2(s_x(\Gamma_k),\bar k^*)$ by Shapiro's Lemma. This proves that the composition of both arrows corresponds to the restriction with respect to the subgroup $s_x(\Gamma_k)$ of $F^X$.
\end{proof}

\section{The case of abelian stabilizers}\label{section abelian}
The formulas we present below are inspired by Colliot-Th\'el\`ene's formula in the constant case (cf.~\cite[thm.~5.5]{ColliotBrnr}), where he emulates Bogomolov's work in \cite{BogomolovBrnr1}. In particular, the formula uses the triviality of the unramified Brauer group of homogeneous spaces with \emph{constant abelian} stabilizer. In order to generalize the formula to the non-constant case, we also need to generalize the result on triviality to non-constant abelian stabilizers.\\

Let $k$ be a field of characteristic 0, $G$ be a semisimple simply connected $k$-group and $X$ a homogeneous space of $G$ with finite abelian stabilizer $\bar A$. We get then as before the finite gerb
\[1\to \bar A\to \bar A^X\to \Gamma_k\to 1,\]
which gives us a natural action of $\Gamma_k$ on $\bar A$ by conjugation in $\bar A^X$ (which is well defined since $\bar A$ is \emph{abelian}). We get then a natural $k$-form $A$ of $\bar A$ associated to $X$.

\begin{pro}\label{prop abelian}
Let $k$ be a field of characteristic 0, $G$ be a semisimple simply connected $k$-group and $X$ a homogeneous space of $G$ with finite abelian stabilizer. Let $A$ denote the natural $k$-form of the stabilizer, $L/k$ be an extension splitting $A$ and $n$ be the exponent of $A$. Assume that for every $p^r$ dividing $n$ with $p$ a prime number, the extension $L(\mu_{p^r})/k$ is cyclic. Then $\brnr X=\bro X$.
\end{pro}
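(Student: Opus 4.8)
The plan is to translate the statement entirely into the cohomology of the gerb via Proposition \ref{prop Brnr et H2} and then prove a vanishing result for $H^2$ of the gerb $\bar A^X$ that uses the arithmetic hypothesis on $L(\mu_{p^r})/k$. Since $\bro X$ is the image of $\br k = H^2(\Gamma_k,\bar k^*)$ inside $\br X = H^2(\bar A^X,\bar k^*)$, the inflation map, the equality $\brnr X = \bro X$ amounts to showing that every unramified class in $H^2(\bar A^X,\bar k^*)$ comes by inflation from $\Gamma_k$. Equivalently, I would show that the quotient $\brnr X / \bro X = \brnral X$ vanishes, which by the abelian case discussed in the introduction (and in \cite{BDH}) is all there is, since for abelian stabilizers there is no transcendental part. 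So the real content is to prove $\brnral X = 0$, i.e.~that the ``algebraic unramified'' subgroup is trivial.

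First I would set up the cohomological description. Writing $E = \bar A^X$ with the extension $1 \to \bar A \to E \to \Gamma_k \to 1$, the Hochschild–Serre (inflation–restriction) spectral sequence for this extension gives the standard five-term exact sequence and lets me identify $H^2(E,\bar k^*)/\mathrm{Inf}(H^2(\Gamma_k,\bar k^*))$ with a subquotient built from $H^1(\Gamma_k, \hom(\bar A,\bar k^*))$ and $H^2(\Gamma_k,\bar k^*)$-related terms. The key geometric input, which is the heart of Bogomolov's and Colliot-Thélène's method, is that an \emph{unramified} class must be trivialized upon restriction to every suitable subgroup $D$ of $E$; in the abelian-stabilizer case the relevant test subgroups are exactly those in $\subgps{\bic}{\scyc}{E}$ (or $\subgps{\ab}{\scyc}{E}$), so I expect the unramified subgroup of $\br X$ to be contained in some $\sha{2}{\bic}{\scyc}(E,\bar k^*)$. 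Thus the task reduces to showing that this $\Sha^2$ group is contained in $\bro X = \mathrm{Inf}\,H^2(\Gamma_k,\bar k^*)$.

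Next I would carry out the cohomological vanishing. After reducing modulo the inflated part, a class is governed by its image in $H^1(\Gamma_k, \hom(\bar A, \bar k^*)) = H^1(\Gamma_k, \hat A)$, where $\hat A$ is the Cartier dual, together with a pairing/cup-product structure coming from the extension class of $E$. The restriction-to-subgroups condition forces this residual data to vanish on every strictly procyclic-modulo-$\bar A$ bicyclic subgroup. The arithmetic hypothesis enters precisely here: splitting $A$ over $L$ and decomposing into $p$-primary parts, the requirement that $L(\mu_{p^r})/k$ be cyclic guarantees that the relevant local Galois cohomology of $\mu_{p^r}$ (and hence of $\hat A$) is detected by cyclic/procyclic subgroups, so that triviality on the test family $\subgps{\bic}{\scyc}{E}$ propagates to global triviality. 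Concretely I would reduce to the case where $A$ is $p$-primary (treating each $p$-Sylow separately via the direct-sum decomposition of $\bar A$ and restriction–corestriction), and use the cyclicity of $L(\mu_{p^r})/k$ to produce, for any nonzero residual class, a strictly procyclic subgroup of $\Gamma_k$ together with a bicyclic subgroup of $\bar A$ on which the restriction is nonzero — contradicting membership in $\sha{2}{\bic}{\scyc}(E,\bar k^*)$. This last contrapositive construction of a witnessing subgroup is where I expect the main difficulty to lie.

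The hard part will be the explicit construction of the test subgroups $D \in \subgps{\bic}{\scyc}{E}$ that detect a nontrivial residual class, because one must simultaneously control the abelian part $D \cap \bar A$ (keeping it bicyclic) and the image $\pi(D)$ in $\Gamma_k$ (keeping it strictly procyclic), while threading the non-split gerb structure of $E$; the cyclicity hypothesis on $L(\mu_{p^r})/k$ is exactly the condition that makes such a simultaneous choice possible, by ensuring the Galois action on the dual $\hat A$ factors through a procyclic group whose image can be lifted into $E$ as a strictly procyclic subgroup. Once the witnessing subgroup is produced, the contradiction with the $\Sha^2$ condition is immediate, and the containment $\brnr X \subseteq \bro X$ (together with the trivial reverse inclusion) finishes the proof.
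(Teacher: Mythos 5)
Your opening reductions (splitting into $p$-primary parts via restriction--corestriction, and using the geometric vanishing $\brnr\bar X=0$ for abelian stabilizers to reduce to the algebraic part) do match the paper. But the core of your plan is circular. You take as ``key geometric input'' the containment $\brnr X\subset\sha{2}{bic}{scyc}(\bar A^X,\bar k^*)$ and then propose to show that this $\Sha^2$-group lies in $\bro X$. That containment is not an available input: in the paper it is precisely the hard inclusion of Theorem \ref{main thm}, and its proof there \emph{uses} Proposition \ref{prop abelian}. Indeed, to see that an unramified class dies on a closed subgroup $D$ with $\pi(D)$ strictly procyclic, one restricts (via Propositions \ref{prop subgps} and \ref{prop Brnr et H2}) to $\brnr Y_D$ for an auxiliary homogeneous space $Y_D$ with abelian stabilizer over a field $L_D$ of cohomological dimension $\leq 1$, and one then needs $\brnr Y_D=\bro Y_D$ --- i.e.\ the proposition itself over $L_D$. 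What the Bogomolov--Colliot-Th\'el\`ene residue argument gives for free is the \emph{opposite} inclusion ($\Sha^2\subset\brnr X$); you have the two directions backwards. The paper's actual engine is entirely different and does not construct any witnessing subgroups inside the gerb: it cites \cite[Thm.~8.1]{BDH} to place the residual class $\bar\alpha\in H^1(k,\hat A)$ inside $\Sha^1_{\cyc}(k,\hat A)$ (classes killed on all \emph{procyclic} subgroups of $\Gamma_k$), then kills $\bar\alpha$ by inflation--restriction for $L(\mu_{p^r})/k$, using that $\hat A$ is constant over $L(\mu_{p^r})$ and that the cyclic quotient $\Gamma_{L(\mu_{p^r})/k}$ lifts to a procyclic subgroup $C\subset\Gamma_k$. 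The ``contrapositive construction of a witnessing subgroup'' that you flag as the main difficulty, and leave unspecified, is exactly the step the paper avoids by importing BDH's theorem.

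There is also a structural reason your strategy cannot work as stated: Proposition \ref{prop abelian} carries \emph{no} non-essentially-real hypothesis, and the paper needs it over $\r$ (it is applied to $Z=G/C$ in the proof of Theorem \ref{thm ev reals}). Any argument whose detection device is restriction to subgroups with \emph{strictly} procyclic image in $\Gamma_k$ fails there. Concretely, for $k$ real closed the only strictly procyclic subgroup of $\Gamma_k\cong\z/2\z$ is the trivial one, so the test family reduces to subgroups of $\bar A$; taking $X=\mathrm{SL}_2/\mu_2\cong\mathrm{PGL}_2$ over $\r$, one has $H^2(\mu_2,\C^*)=0$, so $\sha{2}{ab}{scyc}(\bar A^X,\C^*)$ is all of $\br X$, which strictly contains $\bro X$ (the algebraic Brauer group of $\mathrm{PGL}_{2,\r}$ is $H^1(\Gamma_\r,\z/2)=\z/2$). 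So your second containment $\sha{2}{bic}{scyc}\subset\bro X$ is simply false over essentially real fields, where the proposition nevertheless holds and is needed. This is precisely why the paper's proof of Proposition \ref{prop abelian} runs through $\Sha^1_{\cyc}$ (procyclic subgroups, via BDH) rather than through the $\scyc$-families, and why only Theorem \ref{main thm} --- not this proposition --- excludes essentially real fields.
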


Recall (cf.~Section \ref{section notations}) that $\bro X$ was defined as the image of $\br k\to\br X$, so this proposition says that the unramified Brauer group consists in \emph{constant} classes under such hypotheses.

\begin{rem}
Note that if $L=k$ the last hypothesis is automatically satisfied for odd $p$, so that it is only a condition on the ciclicity of the extension $k(\mu_{2^r})/k$. As such, this is a generalization of Colliot-Th\'el\`ene's condition $Cyc(G,k)$ for a constant $k$-group $G$, which is precisely the particular case of $L=k$, see \cite{ColliotBrnr}.
\end{rem}

\begin{proof}
Since $\brnr X$ is a torsion group, it will suffice to prove the proposition for $\brnr X\{p\}$ for a given prime number $p$. Consider then the maximal subextension $k\subset L_p\subset L$ such that $[L_p:k]$ is prime to $p$ (note that $L/k$ is necessarily cyclic). By the classic restriction-corestriction argument, we may assume that $k=L_p$ and hence that $\Gamma_{L/k}$ is a finite cyclic $p$-group.

Consider now the $p$-Sylow subgroup $S$ of $A$. Since the group $H^2(k,A)$ from which $\bar A^X$ comes from splits into the product of its Sylow subgroups, we know that there exists an extension
\[1 \to S \to E \to \Gamma_{k} \to 1,\]
that corresponds, on one side, to a closed subgroup of $\bar A^X$ of index prime to $p$, and on the other side, by Proposition \ref{prop subgps}, to the group $\bar S^Y$ for a homogeneous space $Y$ of $G$ with geometric stabilizer $\bar S$ lying above $X$. Proposition \ref{prop Brnr et H2} tells us then that the restriction of $\brnr X\{p\}$ to $H^2(\bar S^Y,\bar k^*)$ is contained in $\brnr Y\{p\}$ and once again the classic restriction-corestriction argument tells us that we may reduce to the case where $X=Y$, i.e.~we may assume that $A$ is an abelian $p$-group split by a cyclic $p$-primary extension.\\

Since $A$ is abelian, we get that $\brnr \bar X=0$ by \cite[Prop.~26]{GLABrnral2} and \cite[Prop.~3.2]{ColliotBrnr}, hence any element $\alpha\in\brnr X$ is algebraic. By \cite[Thm.~8.1]{BDH}, we know then that the image $\bar\alpha$ of $\alpha$ in $\brnr X/\bro X$ is in $\Sha^1_\cyc(k,\hat A)\subset H^1(k,\hat A)$, where $\hat A=\hom(A,\mu_{p^r})$, $p^r$ is the exponent of $A$ and $\Sha^1_\cyc$ denotes the elements that are trivialized by restriction to every procyclic subgroup of $\Gamma_k$. Now, the inflation-restriction sequence for $L(\mu_{p^r})/k$ reads
\[1\to H^1(L(\mu_{p^r})/k,\hat A)\to H^1(k,\hat A)\to H^1(L(\mu_{p^r}),\hat A),\]
and $\bar\alpha\in H^1(k,\hat A)$ is clearly sent to $\Sha^1_\cyc(L(\mu_{p^r}),\hat A)$. Since $\hat A$ is constant over $L(\mu_{p^r})$, the term on the right corresponds to homomorphisms $\Gamma_{L(\mu_{p^r})}\to \hat A$, hence $\Sha^1_\cyc(L(\mu_{p^r}),\hat A)$ is trivial and $\bar\alpha$ comes from $H^1(L(\mu_{p^r})/k,\hat A)$. But since $L(\mu_{p^r})/k$ is cyclic by condition 2, there exists a procyclic subgroup $C$ of $\Gamma_k$ surjecting onto $\Gamma_{L(\mu_{p^r})/k}$, so that we have the following commutative diagram
\[\xymatrix@R=1.2em{
H^1(L(\mu_{p^r})/k,\hat A) \ar@{^{(}->}[r]^>>>>>>\inf \ar@{^{(}->}[dr]_\inf & H^1(k,\hat A) \ar[d]^\res \\
& H^1(C,\hat A).
}\]
Since the restriction of $\bar\alpha$ to $C$ is trivial, we deduce that $\bar\alpha$ is trivial, hence $\alpha\in\bro X$, which concludes the proof.
\end{proof}

\section{General formulas for $\brnr X$}\label{section main thm}
Let $k$ be a field of characteristic $0$, $G$ a semisimple simply connected $k$-group and $X$ a homogeneous space of $G$ with finite geometric stabilizer $\bar F$. Let $\bar F^X$ be the finite gerb canonically associated to $X$, cf.~section \ref{section FX}. Recall that Proposition \ref{prop Brnr et H2} gives us an isomorphism
\[\br X\xrightarrow{\sim} H^2(\bar F^X,\bar k^*).\]
Moreover, if we denote by $\mu$ the subgroup of $\bar k^*$ consisting of all the roots of unity, then we have an exact sequence
\[1\to\mu\to\bar k^*\to\bar k^*/\mu\to 1,\]
in which the group $\bar k^*/\mu$ is a uniquely divisible $\bar F^X$-module, hence \emph{cohomologically trivial}. This means that $H^2(\bar F^X,\mu)=H^2(\bar F^X,\bar k^*)$
and hence we can replace $\bar k^*$ by $\mu$ on the statement here below.

\begin{thm}\label{main thm}
Let $k$ be a field that is \emph{not essentially real} (i.e.~the 2-Sylows of $\Gamma_k$ are either infinite or trivial). Let $G,X,\bar F$ be as above. Then the following subgroups of $H^2(\bar F^X,\bar k^*)$ coincide and correspond to $\brnr X$:
\begin{multicols}{2}
\begin{itemize}\setlength{\itemsep}{0em}
\item $\sha{2}{ab}{scyc}(\bar F^X,\bar k^*)$;
\item $\sha{2}{cyc}{scyc}(\bar F^X,\bar k^*)\cap\sha{2}{ab}{0}(\bar F^X,\bar k^*)$;
\item $\sha{2}{bic}{scyc}(\bar F^X,\bar k^*)$;
\item $\sha{2}{cyc}{scyc}(\bar F^X,\bar k^*)\cap\sha{2}{bic}{0}(\bar F^X,\bar k^*)$.
\end{itemize}
\end{multicols}
\end{thm}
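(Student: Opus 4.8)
The plan is to identify $\brnr X$ with one of the four $\Sha^2$-groups via a geometric characterization of unramified classes, and then to prove the coincidence of all four groups by a series of restriction-corestriction reductions to the abelian case treated in Proposition \ref{prop abelian}. First I would establish the containment $\brnr X\subseteq\sha{2}{bic}{scyc}(\bar F^X,\bar k^*)$, which I expect to be the more concrete direction. For any closed subgroup $D\in{}_{\bic}(\bar F^X)_{\scyc}$, Proposition \ref{prop subgps} produces an extension $L/k$ and a homogeneous space $Y$ of $G_L$ with stabilizer $\bar H=D\cap\bar F$ (bicyclic) and a $G_L$-equivariant map $Y\to X_L$, together with the identification $D\cong\bar H^Y$. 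The commutative diagram of Proposition \ref{prop Brnr et H2} shows that the restriction of a class $\alpha\in\brnr X$ to $H^2(D,\bar k^*)=H^2(\bar H^Y,\bar k^*)$ lands inside $\brnr Y$. The condition $\pi(D)$ strictly procyclic is exactly what makes $\Gamma_{L/k'}$ procyclic with the cyclicity-of-cyclotomic hypotheses available after the usual $p$-primary reduction, so that the hypotheses of Proposition \ref{prop abelian} can be verified over $L$; since $\bar H$ is abelian one concludes $\brnr Y=\bro Y$, and a constant class restricts to a class coming from $\br L$, which must be trivial because $\pi(D)$ being strictly procyclic forces $\br L$ (or rather the relevant restriction target) to vanish on the procyclic part. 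Hence $\res_D\alpha=0$, giving the inclusion into every one of the four groups simultaneously.

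The reverse inclusion $\sha{2}{bic}{scyc}(\bar F^X,\bar k^*)\subseteq\brnr X$ is the heart of the matter and I expect it to be the main obstacle. Here I would follow the Bogomolov–Colliot-Th\'el\`ene strategy: unramifiedness of a class in $\br X=H^2(\bar F^X,\bar k^*)$ is tested by restriction to the decomposition groups of divisorial valuations on a smooth compactification, and the key input (the analogue of Bogomolov's theorem, generalized by Colliot-Th\'el\`ene in \cite{ColliotBrnr}) is that residues at such valuations are controlled by restrictions to subgroups whose intersection with $\bar F$ is \emph{bicyclic} and whose image in $\Gamma_k$ is procyclic. Vanishing on all such subgroups should force the residues to vanish, giving unramifiedness. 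The delicate point is the interplay between the geometric residues and the group-cohomological $\Sha^2$: one must show that the subgroups arising from valuations are precisely (or are refined by) those in ${}_{\bic}(\bar F^X)_{\scyc}$, and here the hypothesis that $k$ is \emph{not essentially real} enters, to guarantee that the $2$-primary local contributions (where real places would cause trouble via the finite order-$2$ decomposition groups) behave like the procyclic ones. This is where I would expect to invoke most carefully the structure of strictly procyclic groups and the essentially-real exclusion.

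With the identification $\brnr X=\sha{2}{bic}{scyc}(\bar F^X,\bar k^*)$ in hand, the coincidence of the four groups reduces to pure group cohomology, and I would treat it by comparing the four families of test subgroups. Trivially $\sha{2}{ab}{scyc}\subseteq\sha{2}{bic}{scyc}$ and $\sha{2}{ab}{scyc}\subseteq\sha{2}{ab}{0}$ since ${}_{\bic}(\bar F^X)_{\scyc}\subseteq{}_{\ab}(\bar F^X)_{\scyc}$ and ${}_{0}\subseteq{}_{\scyc}$ as families of subgroups (a smaller test family gives a larger kernel, and the $\ab$ condition is weaker than $\bic$). For the nontrivial equalities I would argue that restriction to an abelian subgroup $D$ with $\pi(D)$ strictly procyclic can be reduced, after passing to $p$-Sylow subgroups, to restriction to \emph{bicyclic} subgroups, using that over the relevant cyclotomic extension the obstruction to triviality of a class in $H^2$ of an abelian group is detected on its bicyclic (rank $\le 2$) subgroups — this is the cohomological reflection of the fact that $H^2$ of an abelian group is generated by cup products of pairs of characters. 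The splitting of the intersection conditions ($\sha{2}{cyc}{scyc}\cap\sha{2}{ab}{0}$ versus $\sha{2}{ab}{scyc}$, and similarly with $\bic$) follows by decomposing an arbitrary abelian-or-bicyclic subgroup $D$ with $\pi(D)$ strictly procyclic into the contribution from $D\cap\bar F$ (handled by the ${}_{\ast}(\cdot)_0$ condition) and a complementary cyclic-image piece (handled by the $\cyc,\scyc$ condition), so that vanishing on both families forces vanishing on $D$. The bookkeeping of these reductions is routine once the main geometric step is done, so I would present it compactly.
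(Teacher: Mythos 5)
Your overall architecture --- one inclusion via Propositions \ref{prop subgps} and \ref{prop Brnr et H2} combined with Proposition \ref{prop abelian}, the other via the Bogomolov--Colliot-Th\'el\`ene residue argument --- is the paper's, and your first paragraph is essentially correct, with one adjustment: you should run it for \emph{abelian} subgroups $D\in\subgps{ab}{scyc}{\bar F^X}$ (Proposition \ref{prop abelian} applies to any abelian stabilizer, not just bicyclic ones), so as to land in the \emph{smallest} of the four groups, $\sha{2}{ab}{scyc}(\bar F^X,\bar k^*)$. As written, with bicyclic $D$, you only get $\brnr X\subset\sha{2}{bic}{scyc}(\bar F^X,\bar k^*)$ and hence containment in the groups that contain it, \emph{not} ``inclusion into every one of the four groups simultaneously''. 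The genuine gap is in how you assemble the four equalities. For the sandwich to close, the reverse inclusion must be proved for the \emph{largest} group, $\sha{2}{cyc}{scyc}(\bar F^X,\bar k^*)\cap\sha{2}{bic}{0}(\bar F^X,\bar k^*)$; you instead target the a priori smaller group $\sha{2}{bic}{scyc}(\bar F^X,\bar k^*)$, and then defer the equalities involving the two intersection groups to a ``routine'' group-cohomological comparison of test families. That third step is not routine and no valid argument is sketched: for a class on an extension of a strictly procyclic group by a finite abelian group, vanishing on the kernel and on complementary cyclic-image pieces does not by itself kill the class (in a Hochschild--Serre filtration there remains the mixed part coming from $H^1(\pi(D),H^1(D\cap\bar F,\mu))$, and detecting it on the stated families is exactly the hard point). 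In the paper these equalities are never proved cohomologically; they are a \emph{consequence} of the geometric sandwich, since the two geometric inclusions concern precisely the smallest and the largest of the four groups.

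The fix is to aim the residue argument at the right target, which it hits naturally. Given $\alpha\in\br X\setminus\brnr X$, choose a discrete valuation ring $A$ of $k(X)$ with $\partial_A(\alpha)\neq0$, pass to the integral closure in $\bar k(\bar G)$, and extract the decomposition group $D\subset\bar F^X$ and the inertia group $I\subset D\cap\bar F$, which is finite cyclic and central in $D\cap\bar F$. If $\alpha_I\neq0$ you are done, since $I\in\subgps{bic}{0}{\bar F^X}$. Otherwise, a commutative diagram of residue maps produces $f\in D$ whose image $\bar f\in\gal(\kappa_B/\kappa_A)$ satisfies $\partial_A(\alpha)(\bar f)\neq0$, and the key claim --- this is exactly where the not-essentially-real hypothesis enters, confirming your guess that the difficulty is $2$-primary --- is that $f$ can be chosen so that $C=\langle f\rangle$ satisfies either $C\subset\bar F$ or $C\cap\bar F=\{1\}$. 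Setting $E=\langle I,C\rangle$, one checks $\partial_{B^E}(\alpha_E)\neq0$, hence $\alpha_E\neq0$, and $E$ lies either in $\subgps{bic}{0}{\bar F^X}$ (first case: $I$ central and $C$ cyclic inside $\bar F$, trivial image) or in $\subgps{cyc}{scyc}{\bar F^X}$ (second case: $E\cap\bar F=I$ is cyclic and $\pi(E)\cong C$ is strictly procyclic). This proves exactly $\sha{2}{cyc}{scyc}(\bar F^X,\bar k^*)\cap\sha{2}{bic}{0}(\bar F^X,\bar k^*)\subset\brnr X$, and together with $\brnr X\subset\sha{2}{ab}{scyc}(\bar F^X,\bar k^*)$ and the trivial containments among the four groups, all the stated equalities follow with no further cohomological work.
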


\begin{rem}
In \cite[Thm.~5.5]{ColliotBrnr}, Colliot-Th\'el\`ene proves that in the case where the stabilizer of a $k$-point is a constant $k$-group $F$ (i.e.~when $\bar F^X=\bar F\times\Gamma_k$) the normalized unramified Brauer group of $X$ is isomorphic to some unspecified subgroup of $\Sha^2_\ab(\bar F,k^*)$. Using the inflation arrow $H^2(\bar F,k)\to H^2(\bar F^X,\bar k^*)$, which is easily seen to be injective in this particular case, we see that $\Sha^2_\ab(\bar F,k^*)$ falls into $\sha{2}{ab}{0}(\bar F^X,\bar k^*)$. Then Theorem \ref{main thm} gives a first description of this subgroup: it is the intersection with $\sha{2}{cyc}{scyc}(\bar F^X,\bar k^*)$.
\end{rem}

\begin{proof}[Proof of Theorem \ref{main thm}]
Note that the following inclusions are evident
\[\xymatrix@R=1em@C=1em{
\sha{2}{ab}{scyc}(\bar F^X,\bar k^*) \ar@{^{(}->}[r] \ar@{^{(}->}[d] & \sha{2}{bic}{scyc}(\bar F^X,\bar k^*) \ar@{^{(}->}[d] \\
\sha{2}{cyc}{scyc}(\bar F^X,\bar k^*)\cap\sha{2}{ab}{0}(\bar F^X,\bar k^*) \ar@{^{(}->}[r] & \sha{2}{cyc}{scyc}(\bar F^X,\bar k^*)\cap\sha{2}{bic}{0}(\bar F^X,\bar k^*).
}\]
Let us then prove first the inclusion
\[\brnr X\subset \sha{2}{ab}{scyc}(\bar F^X,\bar k^*).\]
Let $\alpha\in\brnr X$. Consider a closed subgroup $E$ of $\bar F^X$ in $\subgps{ab}{scyc}{\bar F^X}$. By Proposition \ref{prop subgps}, we know that $E=\bar H^Y$ for some homogeneous space $Y$ of $G_L$ whose geometric stabilizer $\bar H=E\cap\bar F$ is abelian, where $L$ is an extension of $k$ such that $\Gamma_L$ is strictly procyclic. We have then that the restriction $\alpha_E$ of $\alpha$ to $H^2(E,\bar k^*)$ falls into $\brnr Y$ by Proposition \ref{prop Brnr et H2}. Now, Proposition \ref{prop abelian} tells us that $\brnr Y=\br_0 Y$. But since $\Gamma_L$ is strictly procyclic, the field $L$ is of cohomological dimension $\leq 1$ and hence $\br L=0$.\footnote{This is a point where our proof fails for essentially real fields, since in general one should consider all procyclic subgroups, for which we may then get $\br L\simeq\z/2\z$.} Thus $\alpha_E$ is trivial for all such $E$ and $\alpha\in\sha{2}{ab}{scyc}(\bar F^X,\bar k^*)$ as claimed.\\

Let us prove now the inclusion
\[\sha{2}{cyc}{scyc}(\bar F^X,\bar k^*)\cap\sha{2}{bic}{0}(\bar F^X,\bar k^*)\subset\brnr X.\]
Let $\alpha\in H^2(\bar F^X,\bar k^*)\subset\br X$ and assume that $\alpha\not\in\brnr X$. We need to show that there exists a subgroup $E$ of $\bar F^X$ in either $\subgps{bic}{0}{\bar F^X}$ or $\subgps{cyc}{scyc}{\bar F^X}$ such that the restriction $\alpha_E\in H^2(E,\bar k^*)$ is non trivial. Recall that the Brauer group of $X$ is embedded in $\br k(X)$, where $k(X)$ denotes the function field of $X$. Let then $A\subset k(X)$ be a discrete valuation ring of rank one with fraction field $k(X)$, residue field $\kappa_A\supset k$ and such that $\partial_A(\alpha)\neq 0$, where $\partial_A: \br k(X)\to H^1(\kappa_A,\q/\z)$ is the residue map (cf.~\cite[Def.~5.2]{ColliotSansucChili}).

We will follow Bogomolov's argument as it is presented in \cite[Thm.~6.1]{ColliotSansucChili}. In particular, the facts that follow come from \cite[I.7]{SerreCorpsLocaux}. Let $\tilde A$ be the integral closure of $A$ in $\bar k(\bar G)$.\footnote{Note that we are dealing here with an \emph{infinite} Galois extension $\bar k(\bar G)/k(X)$, but since the infinite part falls in the residue field extension, the reader will easily see through this ``problem''.} It is a semi-local Dedekind ring. We can choose then a prime ideal and consider the localization $B$ of $\tilde A$ in it, which is also a discrete valuation ring with residue field $\kappa_B\supset \bar k$. Let $D\subset \bar F^X$ be the associated decomposition subgroup and $I\subset D$ the corresponding inertia group, that is, the kernel of the surjective morphism $D\to\gal(\kappa_B/\kappa_A)$. Since $k\subset \kappa_A$ and $\bar k\subset\kappa_B$, it is easy to see that $I$ is contained in $\bar F$. The group $I$ is thus finite and, by \cite[IV.2]{SerreCorpsLocaux}, we deduce that it is cyclic and central in $D\cap \bar F$ since the field $\bar k(\bar G)^{\bar F}=\bar k(\bar X)$ contains all the roots of unity. On the other hand, one easily sees that $D$ is of finite index in $\bar F^X$, hence its 2-Sylow subgroups are either infinite or contained in $\bar F$ by our hypothesis on $\Gamma_k$.

Denote by $\alpha_I$ the restriction of $\alpha$ to $H^2(I,\bar k^*)$ and similarly for $D$. If $\alpha_I\neq 0$, then we are done since $I$ is cyclic and hence clearly belongs to $\subgps{bic}{0}{\bar F^X}$. Assume then that $\alpha_I=0$. Consider the tower of discrete valuation rings $A\subset B^D\subset B^I\subset B$ with respective fraction fields $k(X)\subset \bar k(\bar G)^D\subset \bar k(\bar G)^I\subset \bar k(\bar G)$ and respective residue fields $\kappa_A=\kappa_A\subset\kappa_B=\kappa_B$. By \cite[Prop.~3.3.1]{ColliotSantaBarbara}, we have the following commutative diagram
\[\xymatrix{
\br k(X) \ar[r] \ar[d]_{\partial_A} &\br \bar k(\bar G)^{D} \ar[r] \ar[d]_{\partial_{B^D}} & \br \bar k(\bar G)^{I}  \ar[d]_{\partial_{B^I}} \\
H^1(\kappa_A,\q/\z) \ar@{=}[r] & H^1(\kappa_A,\q/\z) \ar[r]^{\res} & H^1(\kappa_B,\q/\z),
}\]
with $\partial_A(\alpha)\neq 0$ and $\partial_{B^I}(\alpha_I)=0$. Now, recall that $H^1(\kappa,\q/\z)=\hom(\Gamma_{\kappa},\q/\z)$, hence there exists an element $\bar f\in \gal(\kappa_B/\kappa_A)\cong D/I$ such that $\partial_A(\alpha)(\bar f)\neq 0$. Take a preimage $f\in D$ and consider the closed procyclic subgroup $C:=\langle f\rangle\subset D\subset \bar F^X$. We claim that we may choose $f$ so that either $C\subset \bar F$ or $C\cap \bar F=\{1\}$.

Assuming the claim, put $E:=\langle I,C\rangle\subset D$ and insert $\br \bar k(G)^E$ in the diagram above. We verify then that $\partial_{B^{E}}(\alpha_{E})\neq 0$, which implies that $\alpha_{E}\neq 0$. Now, since $I$ is normal in $D$ and central in $D\cap \bar F$, one easily deduces that either $E\in\subgps{bic}{0}{\bar F^X}$ or $E\in\subgps{cyc}{scyc}{\bar F^X}$ depending on the choice above. Given the trivial inclusions stated at the beginning, this proves the theorem.\\

We give now a proof of the claim in order to finish. The group $C$ splits into a direct product of pro-$p$-groups $C^p$ such that:
\begin{itemize}\setlength{\itemsep}{0em}
\item if $C^p$ is infinite, then $C^p\cap \bar F=\{1\}$;
\item if $C^p$ is finite, then $C^p\subset \bar F$, except maybe if $p=2$.
\end{itemize}
Indeed, if $C^p$ is infinite, then $C^p\cong\z_p$ and the intersection $C^p\cap F$ corresponds to a finite subgroup of $\z_p$, hence it is trivial. On the other hand, if $C^p$ is finite, then its image in the quotient $\Gamma_k$ is finite and hence trivial if $p\neq 2$ by the Artin-Schreier theorem. Since $\partial_A(\alpha)$ is non trivial on $C$, it must be non trivial on one of the $C^p$, so that up to changing $f$ by a generator of such a $C^p$ we are done, unless the only such prime is $p=2$ and $C^2$ is finite. In that case, either the 2-Sylow of $D$ is contained in $\bar F$ and hence so is $C^2$ and we are done, or else the 2-Sylow of $D$ is infinite. In this last case, we can modify $f$ by a $2$-primary element of infinite order in the kernel of $\partial_A(\alpha)$, so that we get an infinite $C^2$, which allows us to conclude.
\end{proof}

We now give a lemma that will be useful for the applications in section \ref{section app}. Recall that, given $x\in X(k)$, we may \emph{normalize} the Brauer group $\br X$ by considering the subgroup of elements that are trivialized by evaluation at $x$, that is, we may define
\[\br^xX:=\ker[\br X\xrightarrow{x^*} \br k],\]
and define the normalized unramified Brauer group $\brnr^x X$ as the intersection of $\br^xX$ with $\brnr X$.

\begin{lem}\label{lem H2 mu}
Let $k$ be a field of characteristic 0 that is \emph{not} essentially real, $G$ a semisimple simply connected $k$-group, $F\subset G$ a finite $k$-subgroup of order $n$ and $X$ the $k$-homogeneous space $G/F$. Let $x\in X(k)$ be the point corresponding to the subgroup $F$, so that the natural section $\Gamma_k\to F^X=F\rtimes\Gamma_k$ is $s_x$. Finally, let $\mu\{n\}$ denote the group generated by the $p$-primary roots of unity for every $p$ dividing $n$ and let $L/k$ be a Galois extension splitting $F$ and containing $\mu\{n\}$.

Then the subgroup $s_x(\Gamma_L)$ is normal in $F^X$ and every $\alpha$ in the normalized unramified Brauer group $\brnr^x X\subset H^2(F^X,\bar k^*)$ can be lifted to the group $H^2(F\rtimes\Gamma_{L/k},\mu\{n\})$ via the composite map
\[H^2(F\rtimes\Gamma_{L/k},\mu\{n\})\xrightarrow{\inf}H^2(F^X,\mu\{n\})\to H^2(F^X,\bar k^*),\]
where $F\rtimes\Gamma_{L/k}$ is the quotient of $F^X$ by $s_x(\Gamma_L)$.
\end{lem}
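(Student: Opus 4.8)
The plan is to treat the two assertions separately: the normality of $s_x(\Gamma_L)$ is a direct computation, while the lifting is obtained by first reducing the coefficients to $\mu\{n\}$ using the normalization, and then descending along the quotient $F^X\twoheadrightarrow F\rtimes\Gamma_{L/k}$ by means of the Hochschild--Serre spectral sequence. For normality, writing $F^X=F\rtimes\Gamma_k$ via $s_x$, conjugation of $s_x(\sigma)$ (for $\sigma\in\Gamma_L$) by an element $(f,\gamma)$ yields $(f\cdot{}^{\gamma\sigma\gamma^{-1}}f^{-1},\,\gamma\sigma\gamma^{-1})$. Since $L/k$ is Galois, $\gamma\sigma\gamma^{-1}\in\Gamma_L$, and since $L$ splits $F$ the group $\Gamma_L$ acts trivially on $F$, so ${}^{\gamma\sigma\gamma^{-1}}f=f$ and the conjugate lies in $s_x(\Gamma_L)$. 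The same triviality identifies $F^X/s_x(\Gamma_L)$ with $F\rtimes\Gamma_{L/k}$, the action on $F$ factoring through $\Gamma_{L/k}$.

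For the coefficients, I would start from $\br X=H^2(F^X,\bar k^*)=H^2(F^X,\mu)$ and the Galois-stable splitting $\mu=\mu\{n\}\oplus\mu'$ with $\mu'=\bigoplus_{p\nmid n}\mu_{p^\infty}$. Because $\mu'$ is uniquely $n$-divisible and carries the trivial $F$-action, $H^i(F,\mu')=0$ for $i\ge1$, so the Hochschild--Serre sequence for $F\triangleleft F^X$ degenerates and inflation gives $H^2(F^X,\mu')\cong H^2(\Gamma_k,\mu')$ with inverse $s_x^*$. As $x^*=s_x^*$ by Proposition \ref{prop acc GLA et BM}, the hypothesis $\alpha\in\brnr^x X$ forces the $\mu'$-component of $\alpha$ to vanish; hence $\alpha\in H^2(F^X,\mu\{n\})$ and $s_x^*\alpha=0$. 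A corestriction-restriction argument along $s_x(\Gamma_k)$ (of index $n$) also shows $n\alpha=0$, so the homomorphisms appearing below factor through finite quotients.

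Next I would run the Hochschild--Serre spectral sequence for $\Gamma_L\triangleleft F^X$ with quotient $Q=F\rtimes\Gamma_{L/k}$ and coefficients $\mu\{n\}$, on which $\Gamma_L$ acts trivially since $L\supseteq\mu\{n\}$. The composite map in the statement is exactly inflation followed by the coefficient map, so it suffices to show that $\alpha$ lies in the bottom step $F^2=\im(\inf)$ of the filtration on $H^2(F^X,\mu\{n\})$. Transitivity of restriction together with $s_x^*\alpha=0$ gives $\res_{\Gamma_L}\alpha=0$, i.e. $\alpha$ dies in $E_\infty^{0,2}\subseteq H^2(\Gamma_L,\mu\{n\})^Q$, so $\alpha\in F^1$. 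It then remains to kill the image of $\alpha$ in $E_\infty^{1,1}$, a subquotient of $E_2^{1,1}=H^1(Q,\hom(\Gamma_L,\mu\{n\}))$.

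The hard part will be this $E^{1,1}$ obstruction, and it is where the unramified hypothesis and the exclusion of essentially real fields enter. I would analyse $E_2^{1,1}$ via the Hochschild--Serre sequence for $F\triangleleft Q$: since $F$ commutes with $\Gamma_L$ (again because $L$ splits $F$) it acts trivially on $\hom(\Gamma_L,\mu\{n\})$, so $E_2^{1,1}$ splits into a part inflated from $H^1(\Gamma_{L/k},\hom(\Gamma_L,\mu\{n\}))$ and a part mapping to $\hom(F,\hom(\Gamma_L,\mu\{n\}))^{\Gamma_{L/k}}$. The first part is detected by restriction to the section $s_x(\Gamma_k)$ and is killed by $s_x^*\alpha=0$. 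The second, "bilinear", part is the genuine difficulty: to annihilate it I would restrict $\alpha$ to subgroups $D=\langle a,s_x(\sigma)\rangle$ with $a\in F$ and $\sigma\in\Gamma_L$; since $s_x(\sigma)$ commutes with $a$, such a $D$ is abelian with $D\cap F=\langle a\rangle$, so $D\in\subgps{ab}{scyc}{F^X}$ as soon as $\overline{\langle\sigma\rangle}$ is strictly procyclic, whence $\res_D\alpha=0$ by $\brnr X=\sha{2}{ab}{scyc}(F^X,\bar k^*)$ (Theorem \ref{main thm}). Because the relevant homomorphisms $\Gamma_L\to\mu\{n\}$ factor through finite quotients, one can choose, for each nontrivial pairing value, such a $\sigma$ with $\overline{\langle\sigma\rangle}$ strictly procyclic precisely when $k$ is not essentially real; this detection kills the bilinear part and places $\alpha$ in $F^2=\im(\inf)$, completing the lift.
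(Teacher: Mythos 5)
Your proposal follows the paper's own proof essentially step for step: the same normality computation, a reduction of coefficients to $\mu\{n\}$ (your direct-sum splitting $\mu=\mu\{n\}\oplus\mu'$, with the $\mu'$-component killed by the normalization at $x$, is a clean variant of the paper's unique lifting of $n$-torsion classes along $\mu\{n\}\hookrightarrow\bar k^*$, and buys you the same thing), the Hochschild--Serre spectral sequence for $s_x(\Gamma_L)\triangleleft F^X$, vanishing of the $E^{0,2}$-image from $s_x^*\alpha=0$, the splitting of $E_2^{1,1}$ by the section with the $H^1(\Gamma_{L/k},-)$-part killed again by $s_x^*\alpha=0$, and the bilinear part killed by restricting to subgroups $\langle a\rangle\times s_x(\overline{\langle\sigma\rangle})\in\subgps{ab}{scyc}{F^X}$ and invoking Theorem \ref{main thm}.

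The one flaw is in the justification of the detection step that you yourself single out as the hard part. You assert that for each nontrivial pairing value one can choose $\sigma$ with $\overline{\langle\sigma\rangle}$ strictly procyclic ``precisely when $k$ is not essentially real'', justified by the fact that the characters factor through finite quotients. That is not the right reason: the hypothesis on $k$ enters the proof only through Theorem \ref{main thm}, and by itself it would not suffice here, since a closed subgroup such as $\Gamma_L$ can contain involutions even when the $2$-Sylows of $\Gamma_k$ are infinite (take $k=\q$ and $L$ a real number field); an involution $\tau$ with $\chi(\tau)\neq 1$ is exactly the configuration that would defeat your detection, because $\overline{\langle\tau\rangle}$ is never strictly procyclic. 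What actually saves the argument is Artin--Schreier combined with the hypothesis $\mu\{n\}\subset L$: for $n>1$ the field $L$ contains a root of unity of order $>2$, hence is not formally real, hence $\Gamma_L$ contains no involution (its fixed field would be a real closed field containing $L$); by Artin--Schreier $\Gamma_L$ then has no nontrivial finite subgroup at all, so for \emph{every} procyclic closed subgroup $C=\prod_p C^p$ of $\Gamma_L$ each $C^p$ is trivial or isomorphic to $\z_p$, i.e.\ every procyclic closed subgroup of $\Gamma_L$ is automatically strictly procyclic, and any $\sigma$ with $\chi(\sigma)\neq 1$ works (the case $n=1$ being vacuous). With this substituted for your parenthetical claim, your proof is complete and coincides with the paper's.
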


\begin{proof}
The fact that $s_x(\Gamma_L)$ is normal comes from the fact that $\Gamma_L$ is normal in $\Gamma_k$ and acts trivially on $F$ by definition. Then the subgroup $F\rtimes\Gamma_L$ of $F^X$ is actually a direct product and the normality of $s_x(\Gamma_L)$ follows. It is easy to see then that the quotient is $F\rtimes\Gamma_{L/k}$.\\

Concerning the second assertion of the lemma, note that, by definition, $\alpha\in\brnr^x X\subset H^2(F^X,\bar k^*)$ is trivial when restricted to the subgroup $s_x(\Gamma_k)$ of $F^X$. Since this subgroup is of index $n$, the classic restriction-corestriction argument tells us that $\alpha$ is an $n$-torsion element. Consider then the short exact sequence
\[1\to\mu\{n\}\to\bar k^*\to\bar k^*/\mu\{n\}\to 1,\]
and note that the quotient is a \emph{uniquely} $p$-divisible group for every $p$ dividing $n$. This means that multiplication by $p$ is an automorphism of $H^i(F^X,\bar k^*/\mu\{n\})$ for every $i\geq 1$. In particular, these groups have no $p$-primary torsion, which tells us that $\alpha$ can be \emph{uniquely} lifted to an $n$-torsion element of $H^2(F^X,\mu\{n\})$ and so it goes for \emph{any} restriction of $\alpha$ to \emph{any} closed subgroup of $F^X$. We will thus make an abuse of notation and still call $\alpha$ the element in $H^2(F^X,\mu\{n\})$.\\

Consider now the Hochschild-Serre spectral sequence
\[E_2^{p,q}=H^p(F\rtimes\Gamma_{L/k},H^q(L,\mu\{n\}))\Rightarrow H^{p+q}(F^X,\mu\{n\})=E^{p+q},\]
associated to the short exact sequence
\[1\to\Gamma_L\xrightarrow{s_x} F^X\to F\rtimes\Gamma_{L/k}\to 1.\]
Since $L$ contains $\mu\{n\}$, $\Gamma_L$ acts trivially on $\mu\{n\}$ and hence the map $E_2^{2,0}\to E^2$ corresponds to the inflation map of the statement of the theorem. In order to prove the lemma, we have to prove then that $\alpha\in E^2$ maps to 0 in $E_2^{0,2}=H^2(L,\mu\{n\})^{F\rtimes\Gamma_{L/k}}$ and then to 0 in $E_2^{1,1}=H^1(F\rtimes\Gamma_{L/k},H^1(L,\mu\{n\}))$.

Now, note that the map $E^2\to E_2^{0,2}$ is nothing but the restriction map
\[H^2(F^X,\mu\{n\})\xrightarrow{s_x^*} H^2(\Gamma_L,\mu\{n\}),\]
associated to the inclusion $s_x:\Gamma_L\to F^X$. Now, it is by hypothesis that $\alpha$ is trivialized by restriction to $s_x(\Gamma_k)\supset s_x(\Gamma_L)$, so $\alpha$ is indeed trivial in $E_2^{0,2}$. For the second arrow, note that the section $s_x$ is well defined on the quotient $F\rtimes\Gamma_{L/k}$ and the Hochschild-Serre spectral sequence is compatible with restrictions, so that we have a commutative diagram
\[\xymatrix{
\ker(H^2(F^X,\mu\{n\})\to H^2(L,\mu\{n\})) \ar[d]_{s_x^*} \ar[r] & H^1(F\rtimes\Gamma_{L/k},H^1(L,\mu\{n\})) \ar[d]^{s_x^*} \\
\ker(H^2(k,\mu\{n\})\to H^2(L,\mu\{n\})) \ar[r] & H^1(L/k,H^1(L,\mu\{n\})),\\
}\]
which tells us that the image of $\alpha$ in $H^1(L/k,H^1(L,\mu\{n\}))$ is trivial because the restriction of $\alpha$ to $H^2(k,\mu\{n\})$ is. Now, the split exact sequence
\[\xymatrix{
1 \ar[r] & F \ar[r] & F\rtimes\Gamma_{L/k} \ar[r] & \Gamma_{L/k} \ar[r] \ar@/_1pc/[l]_{s_x} & 1,
}\]
induces an inflation-restriction exact sequence with a retraction
\[\xymatrix{
0 \ar[r] & H^1(L/k,H^1(L,\mu\{n\})) \ar[r] & H^1(F\rtimes\Gamma_{L/k},H^1(L,\mu\{n\})) \ar[r] \ar@/_1.5pc/[l]_{s_x^*} & H^1(F,H^1(L,\mu\{n\})),
}\]
which tells us that $H^1(L/k,H^1(L,\mu\{n\}))$ is a direct factor of $H^1(F\rtimes\Gamma_{L/k},H^1(L,\mu\{n\}))$ and since the restriction of $\alpha$ to this factor is trivial, all we are left to prove is that its restriction to $H^1(F,H^1(L,\mu\{n\}))$ is trivial. Consider then the sum of restriction maps
\[H^1(F,H^1(L,\mu\{n\}))\to\bigoplus_{A,K}H^1(A,H^1(K,\mu\{n\})),\]
where $A$ ranges over all abelian subgroups of $F$ and $K$ ranges over all the extensions of $L$ that correspond to a (strictly) procyclic subgroup of $\Gamma_L$. Since all actions are trivial, all $H^1$ groups correspond to $\hom$ groups and hence it is an easy exercise to see that this map is \emph{injective}. Finally, once again, since the spectral sequence is compatible with restrictions, for each such pair $(A,K)$ we have a commutative diagram
\[\xymatrix{
\ker(H^2(F^X,\mu\{n\})\to H^2(L,\mu\{n\})) \ar[d]_{\res} \ar[r] & H^1(F\rtimes\Gamma_{L/k},H^1(L,\mu\{n\})) \ar[d]^{\res} \\
\ker(H^2(A\times\Gamma_K,\mu\{n\})\to H^2(K,\mu\{n\})) \ar[r] & H^1(A,H^1(K,\mu\{n\})),\\
}\]
and since $\alpha\in\brnr^x X$, we know by Proposition \ref{prop Brnr et H2} and Theorem \ref{main thm} that the restriction of $\alpha$ to $H^2(A\times\Gamma_K,\mu\{n\})$ is trivial (recall that pre-images in $H^2(\ast,\mu\{n\})$ are \emph{unique} for the $n$-torsion). This proves the triviality of the image of $\alpha$ in the direct sum $\bigoplus_{A,K}H^1(A,H^1(K,\mu\{n\}))$ and hence its triviality in $H^1(F,H^1(L,\mu\{n\}))$, which concludes the proof of the lemma.
\end{proof}

\section{An arithmetic application: the set of bad places for the Brauer-Manin obstruction}\label{section app}
Let $k$ be a number field, $\Omega_k$ the set of its places, $G$ a semisimple simply connected $k$-group and $X$ a homogeneous space of $G$ with finite geometric stabilizer. When $X$ has a $k$-point, we gave in \cite[\S2.2]{Neftin} a definition of ``bad places'' for $X$ with respect to the Brauer-Manin obstruction to weak approximation that we recall here. Denote by $F$ the stabilizer of a $k$-point in $X$ and by $L/k$ the Galois extension obtained via the kernel of the natural morphism $\Gamma_k\to \aut(F)$ given by the action of $\Gamma_k$ on the points of $F$. Then $v\in\Omega_k$ is said to be a \emph{bad place} if it is a non archimedean place that either ramifies in $L/k$ or divides the order of $F$. Otherwise, we say that $v$ is a \emph{good place}.\\

Note that by this definition all archimedean places are assumed to be good places. This is due to the fact that the Brauer-Manin obstruction doesn't interact with archi\-medean places. In fact, we prove that this is true for \emph{all} good places via the following two theorems:

\begin{thm}\label{thm ev reals}
Let $G$ a semisimple simply connected $\r$-group, $F\subset G$ a finite $\r$-subgroup of order $n$ and $X$ the $\r$-homogeneous space $G/F$. Then the Brauer pairing
\[\brnr X\times X(\r)\to \br \r,\]
is constant on $X(\r)$.
\end{thm}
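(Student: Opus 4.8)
The plan is to combine the cohomological description of the Brauer--Manin pairing in Proposition \ref{prop acc GLA et BM} with the abelian-stabilizer triviality of Proposition \ref{prop abelian}, exploiting the fact that the latter remains valid over $\r$ even though $\r$ is essentially real (so that Theorem \ref{main thm} is unavailable). Since $X=G/F$ carries the rational point $x=eF$, we have $F^X=F\rtimes\Gamma_\r$ with projection $\pi\colon F^X\to\Gamma_\r$; write $\Gamma_\r=\gal(\C/\r)=\{1,\sigma\}$. By Proposition \ref{prop acc GLA et BM}, for each $y\in X(\r)$ the pairing value $\langle\alpha,y\rangle$ is the restriction $s_y^*(\alpha)\in H^2(\Gamma_\r,\C^*)=\br\r$ along the section $s_y\colon\Gamma_\r\to F^X$ attached to $y$, whose image $C_y:=s_y(\Gamma_\r)$ is cyclic of order $2$. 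It therefore suffices to prove that $\langle\alpha,y\rangle=\langle\alpha,y'\rangle$ for any pair of points $y,y'\in X(\r)$ and any $\alpha\in\brnr X$.

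Given such $y,y'$, I would set $c=s_y(\sigma)$ and $c'=s_{y'}(\sigma)$; since $s_y,s_{y'}$ are homomorphisms and $\sigma^2=1$, these are involutions lying over $\sigma$, so $E:=\langle c,c'\rangle$ is a finite dihedral subgroup of $F^X$ with $\pi(E)=\Gamma_\r$ and cyclic ``rotation'' subgroup $\bar H:=E\cap F=\langle cc'\rangle$. By Proposition \ref{prop subgps} one has $E\cong\bar H^{Y_E}$ for a homogeneous space $Y_E$ of $G_\r$ (the fixed field of $\pi(E)=\Gamma_\r$ is $\r$) with abelian geometric stabilizer $\bar H$. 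The key point is that the hypotheses of Proposition \ref{prop abelian} are automatic over $\r$: a splitting field of $\bar H$ may be chosen inside $\C$, and any field between $\r$ and $\C$ is cyclic over $\r$, so every $L(\mu_{p^r})/\r$ is cyclic. Hence $\brnr Y_E=\bro Y_E$.

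Next I would restrict $\alpha$ to $E$. By Proposition \ref{prop Brnr et H2} the class $\res_E(\alpha)\in H^2(E,\C^*)$ lies in $\brnr Y_E=\bro Y_E$, which is exactly the image of the inflation map $\pi^*\colon\br\r=H^2(\Gamma_\r,\C^*)\to H^2(E,\C^*)$; thus $\res_E(\alpha)=\pi^*(\beta)$ for some $\beta\in\br\r$. Since $C_y\subseteq E$ and $\pi\circ s_y=\id_{\Gamma_\r}$, one computes
\[\langle\alpha,y\rangle=s_y^*(\alpha)=s_y^*\big(\res_E(\alpha)\big)=s_y^*\pi^*(\beta)=(\pi\circ s_y)^*(\beta)=\beta,\]
and the identical computation with $y'$, using $C_{y'}\subseteq E$ and the \emph{same} class $\beta$, gives $\langle\alpha,y'\rangle=\beta$. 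This yields $\langle\alpha,y\rangle=\langle\alpha,y'\rangle$ and hence the asserted constancy.

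I expect the main obstacle to be conceptual rather than computational: over an essentially real field the restriction of $\alpha$ to a procyclic ``section'' subgroup $C_y$ need not vanish (indeed $\br\r=\z/2\z$), so the strategy of Theorem \ref{main thm} cannot simply be copied. The idea that rescues the argument is to compare two sections \emph{simultaneously} inside a single subgroup with abelian stabilizer; the elementary fact that two involutions generate a dihedral group whose intersection with $F$ is the cyclic rotation subgroup is precisely what makes Proposition \ref{prop abelian} applicable and forces both pairing values to equal the one constant class $\beta$. The only routine verifications are that $E\cap F=\langle cc'\rangle$ and that Proposition \ref{prop abelian}'s cyclicity hypothesis is vacuous over $\r$.
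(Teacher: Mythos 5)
Your proof is correct and is essentially the paper's own argument: the paper likewise reduces to Proposition \ref{prop abelian} applied to the subgroup generated by two sections, taking $y'=x$ so that $E=\langle s_y(\sigma),s_x(\sigma)\rangle=\langle f\rangle\rtimes s_x(\Gamma_{\r})$ with $f=s_y(\sigma)s_x(\sigma)$, realized concretely as the gerb of the sub-homogeneous space $Z=G/\langle f\rangle\to X$. The only cosmetic differences are that the paper first normalizes at $x$ (splitting $\brnr X\simeq\bro X\times\brnr^x X$) and kills $\alpha_Z\in\bro Z\cap\brnr^z Z$ by evaluating at the point $z$ of $Z$ above $x$, whereas you symmetrize over two arbitrary points and instead identify $\bro Y_E$ with the image of inflation along $\pi\colon E\to\Gamma_{\r}$, the same Hochschild--Serre functoriality the paper invokes in Proposition \ref{prop acc GLA et BM}.
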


\begin{thm}\label{thm ev non arch}
Let $k$ be a non archimedean local field of residue characteristic $p$, $G$ a semisimple simply connected $k$-group, $F\subset G$ a finite $k$-subgroup of order $n$ and $X$ the $k$-homogeneous space $G/F$. Assume that $p$ is prime to the order of $F$ and that $F$ is split by an unramified extension of $k$. Then the Brauer pairing
\[\brnr X\times X(k)\to \br k,\]
is constant on $X(k)$.
\end{thm}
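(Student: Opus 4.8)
The plan is to show directly that the normalized unramified Brauer group evaluates to $0$ at every rational point, the constant part $\bro X$ automatically contributing the same value $x^*(\beta)$ at every point. Writing $\brnr X=\brnr^xX\oplus\bro X$ (the splitting being given by $x^*$, which is onto $\br k$ since $X(k)\neq\emptyset$), the pairing is constant if and only if $s_y^*(\alpha)=0$ for every $\alpha\in\brnr^xX$ and every $y\in X(k)$. As $G$ is semisimple simply connected and $k$ is a non-archimedean local field of characteristic $0$, Kneser's theorem gives $H^1(k,G)=1$, so $\delta_x\colon X(k)\to H^1(k,F)$ is surjective; by Proposition \ref{prop acc GLA et BM} it therefore suffices to prove $s_\xi^*(\alpha)=0$ for \emph{every} cocycle class $\xi\in H^1(k,F)$, $s_\xi$ the corresponding section. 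Because $p\nmid n$ and $F$ is split by an unramified extension, every prime-to-$p$ root of unity and a splitting field of $F$ lie in $k_{\un}$; I would thus apply Lemma \ref{lem H2 mu} with $L=k_{\un}$, getting a lift $\tilde\alpha\in H^2(F\rtimes\Gamma_\kappa,\mu\{n\})$ of $\alpha$ with $\Gamma_{L/k}=\Gamma_\kappa\cong\hat\z$, and reducing the claim to the vanishing of $\bar s_\xi^*(\tilde\alpha)$, a prime-to-$p$ torsion class in $H^2(\Gamma_k,\mu\{n\})\subset\br k$, where $\bar s_\xi\colon\Gamma_k\to F\rtimes\Gamma_\kappa$ is induced by $s_\xi$.

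The next step is to detect this class by its residue. Since $\kappa$ is finite, $\br\kappa=0$ and $\partial\colon\br k\xrightarrow{\sim}H^1(\kappa,\q/\z)$; concretely, the Hochschild--Serre sequence for $1\to I\to\Gamma_k\to\Gamma_\kappa\to1$, together with $H^2(I,\mu\{n\})=0$ (as $k_{\un}$ has cohomological dimension $1$) and $H^2(\Gamma_\kappa,\mu\{n\})=0$, gives $H^2(\Gamma_k,\mu\{n\})=H^1(\Gamma_\kappa,H^1(I,\mu\{n\}))$, so that $\bar s_\xi^*(\tilde\alpha)$ \emph{is} its own residue. Now $L/k$ unramified forces $\bar s_\xi(I)\subset F$, and as $|F|$ is prime to $p$ the map $\bar s_\xi|_I$ factors through the procyclic tame quotient, so $C:=\bar s_\xi(I)$ is a \emph{cyclic} subgroup of $F$. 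Functoriality of Hochschild--Serre for the morphism of extensions induced by $\bar s_\xi$ (the identity on the base $\Gamma_\kappa=\Gamma_{L/k}$, and $I\twoheadrightarrow C\hookrightarrow F$ on the kernels) then identifies $\partial(\bar s_\xi^*\tilde\alpha)$ with the pullback of the coefficient-restriction to $C$ of the edge class $\tilde\alpha^{1,1}\in H^1(\Gamma_\kappa,H^1(F,\mu\{n\}))$ of $\tilde\alpha$. Hence everything reduces to the vanishing of the image of $\tilde\alpha^{1,1}$ in $H^1(\Gamma_\kappa,H^1(C,\mu\{n\}))$.

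To conclude I would realize this last class through a subgroup of the allowed family. Lifting a Frobenius $\phi\in\Gamma_k$ via $s_\xi$ and using the tame relation $\phi\tau\phi^{-1}=\tau^q$, one checks that $s_\xi(\phi)$ normalizes $C$ (acting by $q$-th power), so $E:=\overline{\langle C,s_\xi(\phi)\rangle}\subset F^X$ satisfies $\pi(E)=\overline{\langle\phi\rangle}\cong\hat\z$ (strictly procyclic) and $E\cap F=C$ (cyclic); thus $E\in\subgps{cyc}{scyc}{F^X}$ and Theorem \ref{main thm} yields $\alpha|_E=0$. Since $\alpha$ is the inflation of $\tilde\alpha$ and $E$ maps isomorphically onto the subgroup $\bar E\subset F\rtimes\Gamma_\kappa$, an extension of $\hat\z$ by $C$, the vanishing of $\alpha|_E$ is equivalent, through the degenerate ($\mathrm{cd}=1$) Hochschild--Serre sequence for $C\lhd\bar E$, to the vanishing of exactly the edge class above. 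Therefore $\partial(\bar s_\xi^*\tilde\alpha)=0$, whence $\bar s_\xi^*\tilde\alpha=0$ and $s_\xi^*(\alpha)=0$ for all $\xi$, proving the pairing constant.

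The main obstacle is the middle step: carefully tracking the Hochschild--Serre edge maps so as to identify the arithmetic residue of $\bar s_\xi^*\tilde\alpha$ with the group-cohomological restriction of $\tilde\alpha^{1,1}$ to the cyclic group $C$, and then matching this with $\alpha|_E$ for $E\in\subgps{cyc}{scyc}{F^X}$. The two hypotheses enter precisely here: $p\nmid n$ forces inertia into a \emph{cyclic} subgroup of $F$ (so $E\cap F$ is cyclic rather than merely metacyclic), while the unramifiedness of $F$ permits the choice $L=k_{\un}$, making $\Gamma_{L/k}=\Gamma_\kappa$ procyclic and $\pi(E)$ strictly procyclic, i.e.\ placing $E$ in the family controlled by Theorem \ref{main thm}.
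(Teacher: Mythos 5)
Your proposal is, at its core, the paper's own proof. The reduction to the normalized group $\brnr^x X$ via Proposition \ref{prop acc GLA et BM}, the passage to $\mu\{n\}$-coefficients, the application of Lemma \ref{lem H2 mu} with $L$ the maximal unramified extension, and above all the key subgroup are identical: your $E=\overline{\langle C,s_\xi(\phi)\rangle}$ is literally the paper's $Ts_y(C)$ (your $C$ is its $T$, the cyclic image of tame inertia, and your $\overline{\langle s_\xi(\phi)\rangle}$ is its $s_y(C)$), and in both arguments Theorem \ref{main thm} applied to this element of $\subgps{cyc}{scyc}{F^X}$ is the decisive input. The only divergence is the final descent from $\alpha|_E=0$ to $s_\xi^*(\alpha)=0$: the paper shows that the inflation $H^2(\pi_x^L(s_y(\Gamma_k)),\mu\{n\})\to H^2(Ts_y(C),\mu\{n\})$ is injective (its kernel is pro-$p$ and the coefficients are prime-to-$p$ torsion), whereas you route the computation through the residue isomorphism $H^2(\Gamma_k,\mu\{n\})\cong H^1(\Gamma_\kappa,H^1(I,\mu\{n\}))$ and Hochschild--Serre functoriality.

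In that divergent step there is a genuine imprecision: the ``edge class'' $\tilde\alpha^{1,1}\in H^1(\Gamma_\kappa,H^1(F,\mu\{n\}))$ does not exist in general. A class in $H^2(F\rtimes\Gamma_\kappa,\mu\{n\})$ determines a class in $E_2^{1,1}$ only after its image in $E_2^{0,2}=H^2(F,\mu\{n\})^{\Gamma_\kappa}$ vanishes, and $\tilde\alpha|_F$ is precisely the transcendental part of $\alpha$, which the theorem does not assume (and must not assume) to be zero: for Bogomolov-type groups $F$ one can have $\brnr^x X\neq 0$ with nonzero image in $\br\bar X$, and the pairing is constant nonetheless. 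So the object your middle paragraph manipulates is ill-defined; likewise, ``equivalent to the vanishing of the edge class'' is too strong even over $\bar E$, since $\tilde\alpha|_{\bar E}=0$ also requires the component in $H^2(C,\mu\{n\})^{\Gamma_\kappa}$ to vanish (only the implication you actually use is true). Fortunately the flaw is immaterial, because your own final paragraph makes the residue detour unnecessary: since $\Gamma_k$ is topologically generated by $I$ and $\phi$, one has $\bar s_\xi(\Gamma_k)=\overline{\langle C,\bar s_\xi(\phi)\rangle}=\bar E$, so $s_\xi^*(\alpha)=\bar s_\xi^*(\tilde\alpha)$ is the pullback along $\Gamma_k\twoheadrightarrow\bar E$ of $\tilde\alpha|_{\bar E}$, and $\tilde\alpha|_{\bar E}=0$ because $\pi_x^L|_E\colon E\to\bar E$ is an isomorphism and $\alpha|_E=(\pi_x^L|_E)^*(\tilde\alpha|_{\bar E})=0$ by Theorem \ref{main thm}. (The needed fact that $\pi_x^L|_E$ is injective, equivalently $\overline{\langle\phi\rangle}\cap I=\{1\}$, holds because a procyclic group surjecting onto $\hat\z$ does so isomorphically; this triviality of the kernel is what replaces the paper's pro-$p$-kernel injectivity argument.) With this rearrangement your proof is complete and is essentially the published one.
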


With these results, we can relate Colliot-Th\'el\`ene's conjecture on the Brauer-Manin obstruction (cf.~\cite[Introduction]{CT}) with a conjecture given by Demarche, Neftin and the author in \cite[\S1.2]{Neftin} on ``tame approximation'', as it was suggested in \cite[\S2.5]{Neftin}. Indeed, Theorems \ref{thm ev reals} and \ref{thm ev non arch} tell us that the Brauer set $(\prod_{\Omega_k}X(k_v))^{\brnr}$ surjects onto the product $\prod_{S}X(k_v)$ for every finite set $S$ of good places. Assuming the first conjecture, we get then the density of $X(k)$ in $\prod_{S}X(k_v)$. Recalling \cite[Prop.~2.4]{Neftin}, we have thus proved that:

\begin{cor}
Assume that the Brauer-Manin obstruction is the only obstruction to weak approximation for homogeneous spaces with finite stabilizers. Then the \emph{Tame approximation problem} has a positive solution for every finite $k$-group $F$, i.e.~the natural restriction map
\[H^1(k,F)\to\prod_{v\in S}H^1(k_v,F),\]
is surjective for \emph{every} finite set $S$ of good places.\qed
\end{cor}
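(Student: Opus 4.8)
The plan is to deduce the corollary from the two evaluation theorems stated above, which by assumption we are free to use. The strategy is to realize an arbitrary finite $k$-group $F$ as the stabilizer of a rational point in a homogeneous space of a semisimple simply connected group, translate tame approximation into weak approximation for that space, and then invoke the triviality of the local Brauer pairing at good places to discard the Brauer--Manin conditions there.

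First I would fix a finite $k$-group $F$ and a finite set $S$ of good places, and choose a faithful representation $F\hookrightarrow\sln$ for a suitable $n$ (the regular representation embeds $F$ into some $\mathrm{GL}_N$ over $k$, and $\mathrm{GL}_N\hookrightarrow\mathrm{SL}_{N+1}$). Since $\sln$ is semisimple and simply connected, $X:=\sln/F$ is a homogeneous space of the required type, with a rational point $x$ whose stabilizer is $F$. I would then check that the good and bad places attached to $X$ at the start of this section coincide with those intrinsic to $F$: in both cases $L$ is the fixed field of the kernel of $\Gamma_k\to\aut(F)$ and the relevant order is $|F|$. Hence $S$ is a set of good places for $X$ as well.

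Next I would apply Theorem \ref{thm ev reals} at the real places and Theorem \ref{thm ev non arch} at the non-archimedean good places---whose defining condition, namely $v$ prime to $|F|$ and $F$ split by an unramified extension of $k_v$, is exactly what being a good place provides, since $v$ unramified in $L/k$ makes $L\otimes_k k_v$ a product of unramified extensions---together with the insensitivity of the Brauer--Manin pairing to the archimedean places. This shows $\brnr X\times X(k_v)\to\br k_v$ is constant for every $v\in S$, so the conditions imposed by $\brnr X$ are vacuous at the places of $S$ and the Brauer set $\left(\prod_{v\in\Omega_k}X(k_v)\right)^{\brnr}$ surjects onto $\prod_{v\in S}X(k_v)$. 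Under the hypothesis that the Brauer--Manin obstruction is the only obstruction to weak approximation for such $X$, the set $X(k)$ is dense in this Brauer set, hence dense in $\prod_{v\in S}X(k_v)$. Finally I would transport this density to cohomology through the coboundary $\delta_x\colon X(k)\to H^1(k,F)$ and its local analogues, organised by the exact sequences $\sln(k)\to X(k)\to H^1(k,F)\to H^1(k,\sln)$; this is precisely the content of \cite[Prop.~2.4]{Neftin} and yields the surjectivity of $H^1(k,F)\to\prod_{v\in S}H^1(k_v,F)$.

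The genuinely hard inputs are the two evaluation theorems, but here they are granted. Within the present deduction the step that requires care is the last one, matching weak approximation for $X=\sln/F$ with surjectivity of the restriction map on $H^1(\cdot,F)$: one must control the fibres of $\delta_x$ and use that $\sln$ is simply connected (so that the local fibrations $\sln\to X$ admit enough sections and $H^1(k_v,\sln)$ is harmless) to pass from density of rational points on $X$ to surjectivity onto the product of the $H^1(k_v,F)$. This is exactly what \cite[Prop.~2.4]{Neftin} packages, so I expect the main obstacle to be bookkeeping rather than new mathematics.
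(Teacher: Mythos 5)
Your proposal is correct and follows essentially the same route as the paper: the paper's proof is precisely the observation that Theorems \ref{thm ev reals} and \ref{thm ev non arch} make the Brauer--Manin conditions vacuous at good places, so that Colliot-Th\'el\`ene's conjecture gives density of $X(k)$ in $\prod_{v\in S}X(k_v)$, and then \cite[Prop.~2.4]{Neftin} converts this into surjectivity of $H^1(k,F)\to\prod_{v\in S}H^1(k_v,F)$. Your additional details (the embedding $F\hookrightarrow\sln$ via the regular representation and the verification that good places for $X$ satisfy the hypotheses of Theorem \ref{thm ev non arch}) are exactly the bookkeeping the paper delegates to the cited reference.
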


\begin{proof}[Proof of Theorem \ref{thm ev reals}]
Let $x\in X(\r)$ be the point corresponding to the subgroup $F$. Then we may consider the normalized subgroup $\brnr^x X\subset\brnr X$ of the elements that are trivial when evaluated in $x$. It is clear then that $\brnr X\simeq \bro X\times \brnr^x X$ and hence it will suffice to prove that the pairing is \emph{trivial} on the subgroup $\brnr^x X$.

Consider the group $F^X=F\rtimes\Gamma_{\r}$. This group comes with a splitting $s_x:\Gamma_{\r}\to F^X$ associated to $x$. Consider then an element $\alpha\in\brnr^x X\subset H^2(F^X,\C^*)$ and take a point $y\in X(\r)$. Let $s_y:\Gamma_{\r}\to F^X$ denote the corresponding splitting. We know then by Proposition \ref{prop acc GLA et BM} that the image of $(\alpha,y)$ by the Brauer-Manin pairing is the restriction of $\alpha$ to $H^2(\Gamma_{\r},\C^*)=\br \r$ via $s_y$.

Let $\sigma$ be the non trivial element in $\Gamma_{\r}$. Then, since it is well-known that sections $\Gamma_{\r}\to F^X$ are classified by the cocycle set $Z^1(\r,F)$, we know that $s_y(\sigma)$ is of the form $fs_x(\sigma)$ with $f\in F$ such that $\asd{\sigma}{}{}{}{f}=f^{-1}$. This tells us that there is a cyclic $\sigma$-stable subgroup $C=\langle f\rangle$ of $F$ such that the section $s_y$ factors through $C\rtimes \Gamma_{\r}$. In other words, we have the folllowing commutative diagram of finite groups
\[\xymatrix@R=0em{
& F^X \\
C\rtimes\Gamma_{\r} \ar@{^{(}->}[ur] \\
& \Gamma_{\r} \ar@{^{(}->}[ul]^{s_y} \ar@{^{(}->}[uu]_{s_y} ,
}\]
which induces the following commutative diagram of cohomology groups
\[\xymatrix@R=0em{
& H^2(F^X,\C^*) \ar@{=}[r] \ar[dd]^{s_y^*} \ar[dl]_{\res} & \br X \ar[dd]^{\mathrm{ev}_y} \\
H^2(C\rtimes\Gamma_{\r},\C^*) \ar[dr]_{s_y^*} & \\
& H^2(\Gamma_{\r},\C^*) \ar@{=}[r] & \br \r.
}
\]
On the other hand, since $C$ is $\sigma$-stable, it corresponds to an $\r$-subgroup of $F$. We can consider then the morphism $Z=G/C\to G/F=X$ and the point $z\in Z$ above $x$ corresponding to the subgroup $C$. Since $\alpha\in\brnr^x X$, we know by Proposition \ref{prop Brnr et H2} that its restriction $\alpha_Z$ to $\br Z=H^2(C\rtimes\Gamma_{\r},\C^*)$ actually falls into $\brnr^z Z$. Now, since $C$ and $\Gamma_{\r}$ are cyclic, Proposition \ref{prop abelian} tells us that $\alpha_Z$ is in $\bro Z\cap\brnr^z Z$ and hence is trivial. We conclude then that its restiction to $s_y(\Gamma_{\r})$ is trivial and hence so is its evaluation at $y$ for \emph{every} $y\in X(\r)$. This proves the theorem.
\end{proof}

\begin{proof}[Proof of Theorem \ref{thm ev non arch}]
Let $x\in X(k)$ be the point corresponding to the subgroup $F$. As before, we may consider the normalized subgroup $\brnr^x X\subset\brnr X$ and prove that the pairing is \emph{trivial} on $\brnr^x X$.

Consider the group $F^X=F\rtimes\Gamma_{k}$. This group comes with a splitting $s_x:\Gamma_{k}\to F^X$ associated to $x$ and Proposition \ref{prop acc GLA et BM} tells us then that the elements in $\brnr^x X$ are elements that are trivialized when restricted to $s_x(\Gamma_{k})$. Since the index of this subgroup is $n$, the classic restriction-corestriction argument tells us then that $\brnr^x X$ is an $n$-torsion group.

Let $\mu\{n\}\subset k^*$ be the group generated by the $q$-primary roots of unity for every $q$ dividing $n$. Then it is easy to see that $H^2(\ast,\mu\{n\})$ corresponds to the subgroup of $H^2(\ast,\bar k^*)$ generated by the elements of $q$-primary torsion. In particular, $H^2(k,\mu\{n\})$ conatins the $n$-torsion of $\br k$ and $H^2(F^X,\mu\{n\})$ contains the $n$-torsion of $\br X$.\\

Consider then an element $\alpha\in\brnr^x X\subset H^2(F^X,\mu\{n\})$ and take a point $y\in X(k)$. Let $s_y:\Gamma_{k}\to F^X$ denote the corresponding splitting. We know then by Proposition \ref{prop acc GLA et BM} that the image of $(\alpha,y)$ by the Brauer-Manin pairing is the restriction of $\alpha$ to $H^2(\Gamma_{k},\mu\{n\})\subset\br k$ via $s_y$. Moreover, if we denote by $W\subset\Gamma_{k}$ the subgroup of wild ramification, then $(s_y)|_{W}=(s_x)|_{W}$. Indeed, $W$ is a pro-$p$-group which acts trivially on $F$ (and hence sections of $F\times W$ are classified by group morphisms $W\to F$) and the order of $F$ is prime to $p$.\\

Let now $L/k$ be the maximal unramified extension of $k$. This extension splits $F$ and contains $\mu\{n\}$, so that Lemma \ref{lem H2 mu} tells us that $s_x(\Gamma_L)$ is normal in $F^X$ and we have a projection $\pi_x^L:F^X\to F\rtimes\Gamma_{L/k}$. Let us look then at the image of the morphism
\[\pi_x^L\circ s_y:\Gamma_k\to F\rtimes\Gamma_{L/k}.\]
The image of $W\subset\Gamma_{k}$ is trivial since $s_y(W)=s_x(W)\subset s_x(\Gamma_L)=\ker(\pi_x^L)$. And since it is well-known that there is an isomorphism of profinite groups
\[\Gamma_{k}/W\simeq\langle\sigma,\tau\,|\,\sigma\tau\sigma^{-1}=\tau^q\rangle,\]
where $q$ is the order of the residue field of $k$, we see that we only have to care about the images of $\sigma$ and $\tau$, which represent respectively a lift of the Frobenius (which generates $\Gamma_{L/k}$) and a generator of the tame inertia subgroup (cf.~\cite[7.5.3]{NSW}).

Since $L/k$ is unramified and $s_y$ is a section, the image of $\tau$ by $\pi_x^L\circ s_y$ must be in $F$ and generate a $s_y(\sigma)$-stable cyclic subgroup $T$ of $F$.

Consider then a lift of $\sigma$ to $\Gamma_k$ and the procyclic subgroup $C\subset\Gamma_k$ generated by it. Then $C$ is an extension of $\langle\sigma\rangle\simeq\hat\z$ and some pro-$p$ procyclic group $P$. We have the following commutative diagram of profinite groups
\[\xymatrix@R=.5em{
F\rtimes\Gamma_{L/k} && F^X \ar@{->>}[ll]_{\pi_x^L} \\
& Ts_y(C) \ar@{^{(}->}[ur] \ar@{->>}[dl]_{\pi_x^L} \\
\pi_x^L(s_y(\Gamma_{k})) \ar@{^{(}->}[uu] && \Gamma_{k} \ar@{->>}[ll]^{\pi_x^L\circ s_y} \ar@{^{(}->}[uu]_{s_y} ,
}\]
where $Ts_y(C)$ is a subgroup of $F^X$ whose image in the quotient $\Gamma_{k}$ is $C$ and whose intersection with $F$ is $T$ since $T$ is stable by the action of $s_y(C)$. This diagram induces the following diagram of 2-cohomology groups
\[\xymatrix@R=.5em{
H^2(F\rtimes\Gamma_{L/k},\mu\{n\}) \ar[rr]^{(\pi_x^L)^*} \ar[dd]_{\res} && H^2(F^X,\mu\{n\}) \ar@{^{(}->}[r] \ar[dd]^{s_y^*} \ar[dl]^{\res} & \br X \ar[dd]^{\mathrm{ev}_y} \\
& H^2(Ts_y(C),\mu\{n\}) & \\
H^2(\pi_x^L(s_y(\Gamma_{k})),\mu\{n\}) \ar[rr]_{s_y^*\circ(\pi_x^L)^*} \ar[ur]^{(\pi_x^L)^*} && H^2(\Gamma_{k},\mu\{n\}) \ar@{^{(}->}[r] & \br k,
}
\]
where the $(\pi_x^L)^*$ arrows are inflation arrows since $\pi_x^L$ is surjective. Using Lemma \ref{lem H2 mu}, we can lift $\alpha\in H^2(F^X,\mu\{n\})$ to an element in $H^2(F\rtimes\Gamma_{L/k},\mu\{n\})$ and we know that its image in $H^2(Ts_y(C),\mu\{n\})$ is trivial by Theorem \ref{main thm}. Assume that the map
\[(\pi_x^L)^*:H^2(\pi_x^L(s_y(\Gamma_{k})),\mu\{n\})\to H^2(Ts_y(C),\mu\{n\}),\]
is injective. Then we get that the image of the lift of $\alpha$ in $H^2(\pi_x^L(s_y(\Gamma_{k})),\mu\{n\})$ is trivial, which tells us that its image in $H^2(\Gamma_{k},\mu\{n\})$ is trivial and hence its evaluation at $y$ is trivial, which concludes the proof of the theorem.

Let us then prove the injectivity of $(\pi_x^L)^*$ in order to conclude. This is an inflation map coming from the surjective morphism $Ts_y(C)\to \pi_x^L(s_y(\Gamma_{k}))$, whose kernel is the pro-$p$ group $P$. An application of the Hochschild-Serre spectral sequence
\[H^p(\pi_x^L(s_y(\Gamma_{k})),H^q(P,\mu\{n\}))\Rightarrow H^{p+q}(Ts_y(C),\mu\{n\}),\]
tells us then that injectivity follows from the triviality of $H^1(P,\mu\{n\})$, which is easily seen since $P$ is a pro-$p$-group and $\mu\{n\}$ is a torsion group whose elements are all of order prime to $p$.
\end{proof}

\begin{rem}
All the results in this paper are most probably true for fields of characteristic $p>0$ as long as one assumes that the finite group $\bar F$ is of order prime to $p$. Note that, accordingly with Theorem \ref{thm ev non arch}, this should imply in particular that, for the function field $k$ of a curve over $\f_q$, \emph{there is no Brauer-Manin obstruction to weak approximation} for homogeneous spaces $X$ of $G$ with $X(k)\neq \emptyset$ and finite stabilizer $F$ such that the natural map $\Gamma_k\to\aut F$ factors through $\Gamma_{\f_q}$ (in particular for constant $F$). Proving (or disproving) that such homogeneous spaces have weak approximation should then be a very interesting result.
\end{rem}

\end{document}